\documentclass[12pt,reqno]{amsart}

\usepackage[utf8]{inputenc}
\usepackage[T1]{fontenc}
\usepackage{amsmath,amssymb,amsfonts,amsthm}
\usepackage{mathtools}
\usepackage{mathrsfs}
\usepackage[mathscr]{euscript}

\usepackage{dsfont}
\usepackage{multicol}
\usepackage{booktabs}
\usepackage{xcolor}
\usepackage{color}
\usepackage{tikz}
\usepackage{url}
\usepackage{soul}
\usepackage{setspace}
\usepackage{graphicx}
\usepackage{lmodern}
\usepackage{enumitem}

\usepackage[top=2.5cm,bottom=2.5cm,left=2.5cm,right=2.5cm]{geometry}

\usepackage[numbers]{natbib}

\definecolor{webgreen}{rgb}{0,.5,0}
\definecolor{webbrown}{rgb}{.6,0,0}

\usepackage[
colorlinks=true,
linkcolor=blue,
citecolor=blue,
urlcolor=blue
]{hyperref}

\usepackage[nameinlink]{cleveref}

\numberwithin{equation}{section}

\newtheorem{theorem}{Theorem}[section]

\newtheorem{proposition}[theorem]{Proposition}
\newtheorem{corollary}[theorem]{Corollary}
\newtheorem{lemma}[theorem]{Lemma}

\theoremstyle{definition}

\newtheorem{definition}[theorem]{Definition}
\newtheorem{example}[theorem]{Example}

\theoremstyle{remark}

\newtheorem{remark}[theorem]{Remark}

\newcommand{\be}{\begin{equation}}
	\newcommand{\ee}{\end{equation}}

\newcommand{\bea}{\begin{eqnarray}}
	\newcommand{\eea}{\end{eqnarray}}

\def\s{\sigma}
\def\P{\mathrm{Per}}
\def\S{\mathfrak{S}}

\def\f{\varphi}

\def\Pn{\mathcal{A}}

\def\C{\mathbb{C}}

\def\Z{\mathbb{Z}}

\DeclareMathOperator{\sgn}{sgn}
\DeclareMathOperator{\per}{per}
\DeclareMathOperator{\tr}{Tr}
\DeclareMathOperator{\rk}{rank}

\title[
Schur multipliers in P\'olya conversion problems
]{\boldmath
	The limits of Schur multipliers in P\'olya conversion problems
	for the $q$-permanent function}
\author{Nour-Eddine Fahssi}

\address{
	Faculty of Sciences and Technology,
	Laboratory MSPASI,\\
	Hassan Second University of Casablanca,
	BP 146, 20650, Mohammedia, Morocco
}

\email{noureddine.fahssi@fstm.ac.ma}

\subjclass[2020]{15A15, 05E15, 05A05}

\keywords{
	$q$-permanent,
	Pólya-type problems,
	Schur multipliers,
	dimensional threshold,
	inversion length,
	sign matrix
}

\begin{document}
	
	\begin{abstract}
		This paper studies generalized Pólya conversion problems for the $q$-permanent. We establish a sharp threshold governing the transition from low-dimensional algebraic flexibility to higher-dimensional combinatorial rigidity. For $n \ge 3$ and $q \ne \pm 1$, we prove that the $q$-permanent is not linearly convertible to the determinant or permanent. Conversely, we completely classify the space of Schur multiplier preservers for $n=2$. Focusing on Schur multipliers, we characterize the preserver exponents as a $(2n-2)$-dimensional space of additive matrices. We show that for lower Hessenberg matrices, the general geometric obstruction disappears, yielding an explicit determinantal reduction and an $\mathcal{O}(n^3)$ evaluation algorithm. Furthermore, we classify permutational converter exponents, proving that for $n \ge 4$, admissible symmetries are strictly constrained to the dihedral group. Finally, we resolve a mixed conversion problem, showing the solution space is nonempty only for $n \le 4$, which provides a direct algebraic characterization of the $q$-permanent's zero locus in low dimensions.
	\end{abstract}
	
	\maketitle
	
	{
		\setcounter{tocdepth}{1}
		\tableofcontents
	}
				\section{Introduction}
				Let $\mathbb{F}^{n\times n}$ denote the vector space of $n\times n$ matrices over a field $\mathbb{F}$. The determinant and permanent are fundamental matrix functions, yet they exhibit a well-known computational dichotomy: while the determinant can be evaluated efficiently in polynomial time, computing the permanent is \#P-complete. 
				
				Motivated by this disparity, P\'olya~\cite{polya} asked in 1913 whether there exists a uniform rule for assigning signs $\pm 1$ to the entries of a generic matrix $A$ to obtain a matrix $B$ satisfying $\mathrm{per}(A)=\det(B)$. Szeg\"o~\cite{szego} demonstrated that this procedure fails for $n \ge 3$. Marcus and Minc~\cite{minc} generalized this negative result, proving that no linear transformation $\varphi$ satisfies $\mathrm{per}(X)=\det(\varphi(X))$ for $n \ge 3$. A closely related question is the linear preserver problem, which asks us to characterize all linear maps $\varphi$ such that $F(\varphi(X))=F(X)$ for a given matrix function $F$. Frobenius~\cite{frob} characterized determinant preservers, and Marcus and May~\cite{minc1} provided the analogous characterization for the permanent.
				
				In this work, we investigate the conversion problems for the $q$-permanent. For a matrix $A$ and a parameter $q\in\mathbb{C}^{\times}$, the $q$-permanent is defined by~\cite{yang}
				\begin{equation} 
					\P_q(A) \coloneqq \sum_{\sigma \in \mathfrak{S}_{n}} q^{\ell(\sigma)} \prod_{i=1}^{n} a_{i,\sigma(i)},
					\ee
					where $\mathfrak{S}_n$ is the symmetric group and $\ell(\sigma)$ is the inversion length of $\sigma$.  The $q$-permanent interpolates between the permanent ($q=1$) and the determinant ($q=-1$). Marques de S\'a ~\cite[Theorem 3.3]{markes} characterized the group $\mathcal{G}_{q}$ of $q$-permanent preservers for $q\notin\{-1,1\}$. This group is generated by the reversal permutation $\sigma_{0}(i)=n+1-i$ and diagonal scalings.
					
					We define the $q$-permanent to be linearly convertible to the determinant (resp. permanent) if there exists an injective linear map $\varphi$ such that, for all $X\in\mathbb{C}^{n\times n}$,
					\begin{equation}\label{conv0}
						\P_q(X) = \det(\varphi(X)) \quad (\text{resp.}\ \P_q(X) = \per(\varphi(X))).
					\end{equation}
					This formulation provides a natural $q$-analog of the classical P\'olya problem. Furthermore, we formalize the extended linear conversion problem as follows: characterize all linear maps $\varphi$ and scalars $q^{\prime}\in\mathbb{C}^{*}$ (potentially depending on $\varphi$ and $q$) satisfying the identity
					\begin{equation} \label{gen}
						\P_{q}(X) = \P_{q'}(\varphi(X)) \quad \text{for all } X \in \mathbb{C}^{n \times n}.
					\end{equation}
					This framework unifies the classical preserver problem ($q = q'$) and the converter problem ($q \neq q'$).
					
					Specifically, we investigate whether the $q$-permanent permits conversions to the determinant or permanent via \textit{Schur multipliers} (weighted entry-wise scalings) of the form 
					\begin{equation} \label{dil} 
						\varphi: A \longmapsto \left[ z^{\lambda_{i,j}} a_{i,j}\right] = z^\Lambda \circ A, \quad z \in \C^\times,
					\end{equation}
					where $\circ$ is the Hadamard product. For an $n \times n$ matrix $\Lambda = [\lambda_{i,j}]$, we write $z^\Lambda \coloneqq [z^{\lambda_{i,j}}]_{i,j=1}^{n}$.
					If the matrix $\Lambda$ satisfies the conversion or preservation conditions, it is called a \textit{converter exponent} or \textit{preserver exponent}, respectively.  
					
					\paragraph{Organization and contributions}  Section~\ref{s2} establishes that a general linear conversion is impossible for $n \ge 3$ (Theorem~\ref{thm:impossibility}). For $n=2$, we completely classify the linear converters and show that the solution space decomposes into two distinct families (Theorem~\ref{th0}). Section~\ref{s3} characterizes the set of preserver exponents (Theorem~\ref{thm:preserver_properties}) as a $(2n-2)$-dimensional subspace defined by a discrete Monge condition. Section \ref{s4} shows that for lower Hessenberg matrices, the general obstruction disappears, yielding an explicit determinantal conversion and, consequently, an $\mathcal{O}(n^{3})$ evaluation algorithm (Theorem~\ref{thm:hessenberg_converters}). Section \ref{s5} classifies permutational converter exponents, proving that while solutions exist for all permutations when $n \le 3$, admissible symmetries are strictly constrained to the dihedral group $D_n$ for $n \ge 4$ (Theorem~\ref{thm:permutational}). Finally, Section \ref{s6} addresses a mixed conversion problem expressing the $q$-permanent as a linear combination of the determinant and permanent. We show that the corresponding solution space is nonempty if and only if $n \le 4$, which provides a direct algebraic characterization of the $q$-permanent's zero locus in low dimensions (Theorem~\ref{thm:mixed_conversion}). \textbf{Section}~\ref{s7} concludes the paper with a summary and a discussion of possible directions for future research.
					\medskip
					
					\noindent{\textit{Notations}.} Throughout the paper, we assume $q \neq 0$. For each $\sigma \in \mathfrak{S}_n$, $P_\sigma$ denotes the associated $n \times n$ permutation matrix. Let $J_n$ be the all-ones matrix in $\mathbb{C}^{n \times n}$, and let $E_{i,j}$ denote the standard matrix units. For a matrix $M$ and a permutation $\sigma$, we define the diagonal product and the $\sigma$-trace by
					\begin{equation*}
						\mathrm{wt}_\sigma (M) \coloneqq \prod_{i=1}^n m_{i,\sigma(i)}, \qquad 
						\tr_\sigma (M) \coloneqq \sum_{i=1}^n m_{i,\sigma(i)}.
					\end{equation*} 
					The standard matrix trace corresponds to $\sigma = \mathrm{id}$. We denote by $\s_0$ the reversal permutation defined by $\s_0(i)=n+1-i$.
					
					\section{General Impossibility and the \texorpdfstring{$n=2$}{n=2} Classification} \label{s2}
					Our first result extends the P\'olya–Szeg\"o rigidity phenomenon for the permanent to the $q$-permanent.
					
					\begin{theorem} \label{thm:impossibility}
						For $n \ge 3$ and $q \neq -1$, there is no linear transformation that satisfies \eqref{conv0} for all $A\in \C^ {n \times n}$.
					\end{theorem}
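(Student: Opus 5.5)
The plan is to compare linear preserver groups: a linear conversion would conjugate the preserver group of the target function onto that of $\P_q$, and the two groups differ too much. Since $\varphi$ is an injective linear endomorphism of $\C^{n\times n}$, it is invertible. If $\P_q=F\circ\varphi$ with $F\in\{\det,\per\}$, then for every linear $T$ with $F\circ T=F$ we get
\[
\P_q\circ(\varphi^{-1}T\varphi)=F\circ T\circ\varphi=F\circ\varphi=\P_q ,
\]
and conversely. Hence $T\mapsto\varphi^{-1}T\varphi$ is an isomorphism of linear algebraic groups $G_F\to\mathcal{G}_q$, where $G_F$ is the full group of linear preservers of $F$ and $\mathcal{G}_q$ is the group described by Marques de S\'a~\cite{markes}. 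Conjugation by a fixed invertible map preserves both the dimension and the component group $G/G^\circ$. The proof therefore reduces to exhibiting a mismatch in one of these two invariants. I treat the generic case $q\neq\pm1$ first, where \cite[Theorem 3.3]{markes} applies.

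\emph{Determinant case.} By Frobenius~\cite{frob}, $G_{\det}$ consists of the maps $X\mapsto MXN$ and $X\mapsto MX^{T}N$ with $\det(MN)=1$. Its dimension is $2n^2-2$. By \cite{markes}, $\mathcal{G}_q$ is generated by the reversal $X\mapsto P_{\s_0}XP_{\s_0}$ and diagonal scalings $X\mapsto D_1XD_2$ with $\det(D_1D_2)=1$, possibly together with transposition. Its identity component is therefore the torus of such scalings, of dimension at most $2n-1$. Since $2n^2-2>2n-1$ for $n\ge 2$, the two groups cannot be isomorphic, which is a contradiction.

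\emph{Permanent case.} Dimensions do not separate the groups here. By Marcus--May~\cite{minc1}, $G_{\per}$ also has a diagonal torus as identity component, so I would use the component group instead. $G_{\per}$ contains all maps $X\mapsto P_\s D_1XD_2P_\tau$ and their transposes. The finite group $(\mathfrak S_n\times\mathfrak S_n)\rtimes\Z/2$ maps injectively into $G_{\per}/G_{\per}^\circ$, because a nontrivial permutation pair moves some matrix unit $E_{i,j}$ and so is not a scaling. This gives $|G_{\per}/G_{\per}^\circ|\ge 2(n!)^2$. On the other side, $\mathcal{G}_q/\mathcal{G}_q^\circ$ is generated by the images of the reversal and of transposition (if present), so it has order at most $4$. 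For $n\ge3$ we have $2(n!)^2\ge 72>4$, which is a contradiction.

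\emph{Boundary values of $q$.} For $q=1$ in the determinant case, $\P_1=\per$, and the statement is exactly the Marcus--Minc theorem~\cite{minc}. For $q=1$ in the permanent case, the identity map converts, so the permanent half of the statement must implicitly exclude $q=1$. The case $q=-1$ is excluded by hypothesis. I would note both points explicitly when writing the proof.

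\emph{Main obstacle.} The delicate step is making sure the invariants are computed for the \emph{full} linear preserver groups: every map must be accounted for, including transpose-type maps and scalar ambiguities. In particular, one must check that the torus $\{(D_1,D_2):\det(D_1D_2)=1\}$ has connected image, so that $\mathcal{G}_q^\circ$ really is that torus and the component count for $\mathcal{G}_q$ is at most $4$. Everything else is formal transport of structure. If the component-group argument for $\per$ proved awkward, a fallback is to compare the maximal finite subgroups fixing a generic point, or to compare singular loci of the hypersurfaces $\{\P_q=0\}$ and $\{\per=0\}$. I expect the group argument to suffice.
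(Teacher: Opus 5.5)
Your proposal is correct and follows the paper's route: conjugation by $\varphi$ identifies the preserver group of $F$ with $\mathcal{G}_q$. You then separate $\det$ by the dimension of the identity component ($2n^2-2$ against a torus) and $\per$ by the size $2(n!)^2$ of the component group. The differences are minor: the paper does not take injectivity from the definition but derives invertibility (a nonzero kernel vector $A$ would make $\P_q$ invariant under translation by $A$, which a suitable triangular test matrix refutes), and you should add this since the theorem speaks of arbitrary linear transformations, while your caveats are accurate refinements that the paper's proof glosses over: transposition also preserves $\P_q$ because $\ell(\s^{-1})=\ell(\s)$, so ``at most $4$ components'' is the correct bound rather than the paper's $2$, and $q=1$ with $F=\per$ is a genuine exception to the statement as written.
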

					\noindent \textbf{Proof.} 
					Let $q \neq \pm 1$, and let $F$ denote the determinant or permanent function. Suppose that there exists a linear transformation $\varphi$ such that $\P_q(X) = F(\varphi(X))$ for all $X \in \mathbb{C}^{n \times n}$.
					
					We first demonstrate that such a transformation must be invertible. Assume that $\varphi$ is singular, that is, $\f (A)=0$ for some non-zero matrix $A=\left[a_{ij}\right]$. Consequently, the $q$-permanent must be translation-invariant by $A$:  $\P_q(X+A) = F(\varphi(X+A)) = F(\varphi(X)) = \P_q(X)$, for all $X \in \mathbb{C}^{n \times n}.$ Using the fact that row and column permutations change $F$ only by a sign, $F(\Pi X \Sigma) = \pm F(X)$ for permutation matrices $\Pi$ and $\Sigma$, we may assume without loss of generality that $a_{11} \neq 0$. Let $B$ be the matrix defined by: $$ b_{ij} = \begin{cases} -a_{1j} & \text{if } i=1, \\ \delta_{ij} & \text{if } i > 1. \end{cases} $$ The matrix $B+A$ has a zero first row, which implies $\P_q(B+A) = 0$. However, the matrix $B$ itself is upper triangular with diagonal entries $(-a_{11}, 1, \dots, 1)$. Thus, $\P_q(B) = -a_{11} \neq 0$. This contradicts the translation invariance condition $\P_q(B) = \P_q(B+A)$. Therefore, we can infer that $\varphi$ is invertible.
					
					If $T \in \mathcal{G}_q$, then for any $X$,
					$$ F(\varphi T \varphi^{-1}(X)) = \P_q(T \varphi^{-1}(X)) = \P_q(\varphi^{-1}(X)) = F(X). $$
					Thus, the conjugation map $\Phi : T \mapsto \varphi T \varphi^{-1}$ preserves $F$ and yields a group isomorphism $\mathcal{G}_q \cong \mathcal{G}_{\pm 1}$. Since $\varphi$ is a linear isomorphism, $\Phi$ is an isomorphism of linear algebraic groups over $\mathbb{C}$. As such, $\mathcal{G}_q$ and $\mathcal{G}_{\pm 1}$ must share the same dimension for their identity component $\dim(\mathcal{G}^\circ)$ and the same cardinality for their finite group of connected components $|\mathcal{G} / \mathcal{G}^\circ|$.  
					
					According to the characterization by Marques de S\'a~\cite{markes}, for generic $q \notin \{-1, 1\}$, $\mathcal{G}_q$ is generated by left and right diagonal scalings and the reversal permutation $\sigma_0$. Its identity component $\mathcal{G}_q^\circ$ consists of the mappings $X \mapsto D_1 X D_2$ where $D_1, D_2$ are nonsingular diagonal matrices satisfying $\det(D_1) \det(D_2) = 1$. Thus, the algebraic dimension is $\dim(\mathcal{G}_q) = 2n - 1$. Furthermore, its discrete part is generated exclusively by $\sigma_0$ (since $\sigma_0^2 = \mathrm{id}$), meaning $|\mathcal{G}_q / \mathcal{G}_q^\circ| = 2$. 
					
					We now compare this structure to $\mathcal{G}_{\pm 1}$: 
					\begin{itemize}[leftmargin=20pt]
						\item[$\circ$] If $F = \det$, by Frobenius's theorem, $\mathcal{G}_{-1}$ is generated by maps $X \mapsto U X V$ and $X \mapsto U X^T V$ with $\det(UV)=1$. Choosing $U, V \in \mathbb{C}^{n \times n}$ under the determinant constraint gives $2n^2 - 1$ complex parameters. Moreover, the map is unchanged under $(U, V) \sim (\lambda U, \lambda^{-1} V)$ for any $\lambda \in \mathbb{C}^\times$, removing one more degree of freedom. Thus $\dim(\mathcal{G}_{-1}) = 2n^2 - 2$. For $n \ge 3$, we have $2n^2 - 2 > 2n - 1$, so the groups cannot be isomorphic.
						\item[$\circ$] If $F = \mathrm{per}$, the Marcus–May theorem states that the permanent preserver group $\mathcal{G}_{+1}$ consists of maps $X \mapsto D_1 P X Q D_2$ or $X \mapsto D_1 P X^T Q D_2$, where $P,Q$ are permutation matrices and $D_1,D_2$ are diagonal with $\det(D_1 D_2)=1$. Its identity component $\mathcal{G}_{+1}^\circ$ is generated by diagonal scalings, so $\dim(\mathcal{G}_{+1})=2n-1$. The discrete component group is $(\mathfrak{S}_n \times \mathfrak{S}_n)\rtimes \mathbb{Z}_2$ of size $2(n!)^2$ (arising from row/column permutations and transposition). For generic $q\notin\{-1,1\}$ and $n\ge3$, we have $2(n!)^2>2$, so the component groups, and thus the algebraic groups, are not isomorphic.
					\end{itemize}
					In both cases, the algebraic group isomorphism $\mathcal{G}_q \cong \mathcal{G}_{\pm 1}$ yields a contradiction. Thus, no such linear converter $\varphi$ exists for $n \ge 3$.
					\quad $\square$
					\medskip
					
					Our second result provides the complete classification of these transformations for \mbox{$n=2$}.
					
					\begin{theorem}\label{th0}
						Let $\varphi:\mathbb C^{2\times2}\to\mathbb C^{2\times2}$ be a linear transformation. Then $\varphi$ converts the $q$-permanent to the determinant if and only if its matrix representation
						$M\in\mathbb C^{4\times4}$ belongs to one of the following two families.
						
						\begin{enumerate}
							\item[\emph{(I)}]
							There exist $G=\begin{bsmallmatrix}a&b\\ c&d\end{bsmallmatrix}\in \mathbf{GL}_2(\mathbb C)$ and
							$\alpha,\beta,\gamma\in\mathbb C$ such that
							\[
							M=
							\begin{bmatrix}
								G & B\\
								\alpha G & D
							\end{bmatrix},
							\qquad 
							\text{where} \qquad 
							B=
							\begin{bmatrix}
								-\beta a & \beta b/q\\
								-\beta c & \beta d/q
							\end{bmatrix},
							\qquad
							D=
							\begin{bmatrix}
								\gamma a & -\gamma b/q\\
								\gamma c & -\gamma d/q
							\end{bmatrix},
							\]
							and
							\be \label{acc}
							\alpha\beta+\gamma=-\frac{q}{\det(G)}.
							\ee
							
							\item[\emph{(II)}]
							There exist $G=\begin{bsmallmatrix}a&b\\ c&d\end{bsmallmatrix}\in \mathbf{GL}_2(\mathbb C)$ and
							$\mu\in\mathbb C$ such that
							\[
							M=
							\begin{bmatrix}
								0_{2\times2} & B\\
								G & \mu B
							\end{bmatrix},
							\qquad
							\text{where} \qquad 
							B=\frac1{\det(G)}
							\begin{bmatrix}
								aq & -b\\
								cq & -d
							\end{bmatrix}.
							\]
						\end{enumerate}
					\end{theorem}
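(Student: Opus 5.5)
The plan is to turn the conversion identity into an identity between quadratic forms and solve it by linear algebra, organised by where a distinguished isotropic plane is sent. Write $x=(x_{11},x_{12},x_{21},x_{22})^T$ and $y=Mx$ for the coordinates of $\varphi(X)$, using the same ordering. For $n=2$ we have $\P_q(X)=x_{11}x_{22}+q\,x_{12}x_{21}$. The condition $\P_q(X)=\det(\varphi(X))$ therefore reads
\[
y_1y_4-y_2y_3=x_1x_4+q\,x_2x_3 \qquad\text{for all }x,
\]
that is, $M^TSM=Q_q$ with $S,Q_q$ the symmetric Gram matrices of the two forms. Since $q\neq 0$, both forms are nondegenerate of rank $4$. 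Hence every solution $M$ is invertible, and the solution set is a coset $O(S)\,M_0$ of the orthogonal group of $\det$. This fixes in advance which objects to look at, but I would not use the coset description directly. Instead I would exploit isotropic planes, which is what the block shapes in (I) and (II) suggest.

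\textbf{Step 1 (image of the isotropic plane).} The plane $W=\{x_3=x_4=0\}$ is totally isotropic for $\P_q$. Because $M$ is an isometry onto $\det$, the image $M W$, which is the column span of the left $4\times 2$ block $\begin{bsmallmatrix}G\\ G'\end{bsmallmatrix}$, is a $2$-dimensional totally isotropic plane for $\det$. On $\mathbb C^{2\times2}$ the maximal isotropic planes are exactly the two rulings $\{$matrices with rows proportional in a fixed ratio$\}$ and $\{$matrices with columns proportional in a fixed ratio$\}$. I would determine which ruling occurs by a connectedness/orientation argument, or more concretely by checking directly. Writing the isotropy conditions for the columns of the left block forces either $G'=\alpha G$ with $G$ invertible (case I, the ratio $\alpha$ finite), or $G=0$ (case II, the ratio "$\alpha=\infty$"). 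The column-type ruling has to be excluded by showing that it is incompatible with the mixed terms in Step 2. I expect this exclusion, together with the proof that exactly one ruling survives, to be the main obstacle. The first thing I would try is to plug the column ruling into the polarized equations and derive a contradiction with $q\neq 0$, or else to absorb that ruling into (I) via $G$.

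\textbf{Step 2 (solving for the right block).} Once the left block is fixed, the remaining identity is linear in the right $4\times2$ block $\begin{bsmallmatrix}B\\ D\end{bsmallmatrix}$, apart from one quadratic normalisation. The polarised conditions $B(e_i,e_j)$, for $e_i\in W$ and $e_j\in\{e_3,e_4\}$, must reproduce the cross terms $x_1x_4$ and $q\,x_2x_3$. In case (I) these linear equations have a solution space parametrised by $\beta,\gamma$, and this produces exactly the displayed $B$ and $D$ with their $1/q$ and sign patterns. The isotropy of $\{x_1=x_2=0\}$ (the absence of $x_3^2,x_3x_4,x_4^2$ terms) then gives one quadratic relation, which reduces to \eqref{acc}. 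In case (II) the same linear equations force the top-right block to be $\det(G)^{-1}\begin{bsmallmatrix}aq&-b\\ cq&-d\end{bsmallmatrix}$, and the bottom-right block to be a multiple $\mu B$. Here the quadratic condition is automatically satisfied.

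\textbf{Step 3 (converse).} Finally I would verify by direct expansion that every $M$ of form (I) satisfying \eqref{acc}, and every $M$ of form (II), gives $y_1y_4-y_2y_3=x_1x_4+qx_2x_3$. This is a routine $2\times2$ computation, and it completes the ``if and only if''.
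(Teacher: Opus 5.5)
Step 1 of your plan cannot be carried out, because the column-type ruling really occurs. The ``only if'' direction of the statement as written is therefore false, and no argument can exclude that ruling.

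Your own coset description shows why. Transposition $T\colon Y\mapsto Y^T$ preserves $\det$, so $T\in O(S)$, and it swaps the two rulings ($uw^T\mapsto wu^T$). Hence if $M$ is a converter sending $W$ to a row-type plane, then $TM$ is a converter sending $W$ to a column-type plane.

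Concretely, take the member of (I) with $G=I$, $\alpha=\beta=0$, $\gamma=-q$, namely $X\mapsto\begin{bsmallmatrix}x&y\\-qz&t\end{bsmallmatrix}$. Transposing it gives $\varphi(X)=\begin{bsmallmatrix}x&-qz\\y&t\end{bsmallmatrix}$, and $\det\varphi(X)=xt+qyz=\P_q(X)$. Its matrix has leading block $A=\begin{bsmallmatrix}1&0\\0&0\end{bsmallmatrix}$ of rank one. So it lies neither in (I), which needs $A=G$ invertible, nor in (II), which needs $A=0$, and it cannot be ``absorbed into (I) via $G$''.

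More generally, $O(S)M_0$ has two six-dimensional components, and (I)$\cup$(II) is exactly one of them. Every converter in the other component has $A=wa^T$ and $C=wc^T$ with $w\neq0$ and $a,c$ independent, so $\rk A=\rk C=1$.

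The paper's proof does not avoid this either. Its Case 3 dismisses $\rk A=1$ by asserting that invertibility of $M$ forces $B$ and $D$ to be invertible. That fails in the example above, where $B=\begin{bsmallmatrix}0&0\\-q&0\end{bsmallmatrix}$ and $D=\begin{bsmallmatrix}0&0\\0&1\end{bsmallmatrix}$. Your ruling framework is well suited to the corrected statement. The row-type case yields exactly (I) and (II), and the column-type case reduces to it by composing with $T$. So the converters are precisely $M$ and $TM$ with $M$ in (I) or (II).

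When you carry out Step 2, note that you have swapped which condition does what. Put $E=\begin{bsmallmatrix}0&1\\-1&0\end{bsmallmatrix}$.
\begin{itemize}
\item In case (I), the cross terms read $G^TE(D-\alpha B)=\begin{bsmallmatrix}0&1\\q&0\end{bsmallmatrix}$, which leaves $B$ completely free. It is the isotropy of $\{x_1=x_2=0\}$, i.e.\ skew-symmetry of $B^TED$ (linear in $B$ once $D$ is substituted), that forces $B=\beta G\,\mathrm{diag}(-1,1/q)$. Relation \eqref{acc} comes from the cross terms, and the isotropy condition is automatic for the displayed shapes.
\item In case (II), the cross terms fix $B$ and say nothing about $D$. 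The relation $D=\mu B$ is exactly what the quadratic condition imposes, so that condition is not automatically satisfied.
\end{itemize}
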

					
					\noindent \textbf{Proof.}  Let $M$ be the matrix of $\varphi$ with respect to the standard basis of $\mathbb{C}^{2 \times 2}$. We identify a matrix $X = \begin{bsmallmatrix} x & y \\ z & t \end{bsmallmatrix}$ with the vector $w = [x, y, z, t]^T$. With this identification, the quadratic forms $2 \det(X)$ and $2 \P_q(X)$ are represented by the matrices $J(-1)$ and $J(q) = \text{anti-diag}(1, q, q, 1)$, respectively. The condition $\P_q(X) = \det(\varphi(X))$ is equivalent to the matrix congruence:\begin{equation}\label{congru}
						M^T J(-1) M = J(q).
					\end{equation} Since $\det(J(q)) = q^2 \ne 0$, the matrix $M$ is invertible. Let us write $M$ in $2 \times 2$ blocks as $M = \begin{bsmallmatrix} A & B \\ C & D \end{bsmallmatrix}$. To compute the congruence explicitly, we write 
					$$J(-1) = \begin{bmatrix} 0 & E \\ -E & 0 \end{bmatrix}, \quad J(q) = \begin{bmatrix} 0 & Q_q \\ Q_q^T & 0 \end{bmatrix},$$ where $$E = \begin{bmatrix} 0 & 1 \\ -1 & 0 \end{bmatrix}, \quad Q_q = \begin{bmatrix} 0 & 1 \\ q & 0 \end{bmatrix}$$The congruence condition~\eqref{congru} is equivalent to the block system
					\begin{subequations}\label{group}
						\begin{align}
							A^TEC-C^TEA&=0,\label{1}\\
							D^TEB-B^TED&=0,\label{2}\\
							A^TED-C^TEB&=Q_q.\label{3}
						\end{align}
					\end{subequations} We analyze the system by considering the rank of the leading block $A$. In our proof, we will repeatedly use the fundamental $2 \times 2$ matrix identity $X^T E X = \det(X) E$. We analyze the system by considering the rank of the leading block $A$. 
					
					\medskip
					\noindent
					\textbf{Case 1: $A$ invertible.}\; Let $A = G \in \mathbf{GL}_2(\mathbb{C})$. Equation \eqref{1} states that $G^T E C$ is skew-symmetric, hence $G^T E C = \lambda E$ for some $\lambda \in \mathbb{C}$. Thus, we get $E C = \lambda (G^T)^{-1} E$. Since $E^{-1} = -E$, we can use the standard $2 \times 2$ adjugate identity $G^{-1} = -E G^T E / \det(G)$ to obtain: $$C = -\lambda E (G^T)^{-1} E = \alpha G$$ where $\alpha = \lambda / \det(G)$. Substituting $C = \alpha G$ into \eqref{3} gives $G^T E D - \alpha G^T E B = Q_q$, which leads to: $$D = \alpha B - E (G^T)^{-1} Q_q.$$ Now, equation \eqref{2} implies that the matrix $D^T E B$ is skew-symmetric. Substituting our expression for $D$, we get: $$D^T E B = \alpha B^T E B - (E (G^T)^{-1} Q_q)^T E B.$$ Since $B^T E B = \det(B) E$ is skew-symmetric, the remaining term $-Q_q^T G^{-1} B$ must also be skew-symmetric. Therefore, $-Q_q^T G^{-1} B = \beta E$ for some $\beta \in \mathbb{C}$, which yields: $$B = -\beta G (Q_q^T)^{-1} E = \beta G Q_{1/q} E.$$ This explicitly determines $B$ and $D$, yielding the first family of solutions. The scalar $\gamma$ appearing in the expression of $D$ is determined by
					$\alpha$, $\beta$, and $G$, and the relation~\eqref{acc} follows from equation~\eqref{3}.
					
					\medskip
					\noindent
					\textbf{Case 2: $A=\mathbf{0}$.}\; Equation \eqref{1} becomes trivial, while \eqref{3} reduces to $-C^T E B = Q_q$. Since $Q_q$ is invertible, both $B$ and $C$ must be invertible. Writing $C = G \in \mathbf{GL}_2(\mathbb{C})$, we obtain $B = E (G^T)^{-1} Q_q$. Equation \eqref{2} can be rewritten as $(D B^{-1})^T E = E (D B^{-1})$. For $2 \times 2$ matrices, the commutation relation $X^T E = E X$ forces $X$ to be a scalar multiple of the identity. Therefore, $D B^{-1} = \mu I$, giving $D = \mu B$. This yields the second family.
					
					\medskip
					\noindent
					\textbf{Case 3: $\rk (A)=1$.}\; Equation \eqref{1} implies $A^T E C$ is skew-symmetric. Since $\rk(A)=1$, the matrix $A^T E C$  has rank at most 1, which forces $A^T E C = 0$. Consequently, $\rk(C) \le 1$. Since $Q_q$ has rank 2, and the terms $A^T E D$ and $C^T E B$ in Equation \eqref{3} have rank at most 1, $\rk (A^T E D)= \rk (C^T E B)=1$. The invertibility of the full matrix $M$ ensures $B$ and $D$ are invertible. Following the same logic as Case 2, equation \eqref{2} combined with the invertibility of $B$ forces $D = \kappa B$. Multiplying \eqref{3} on the right by $C$ gives: $$Q_q B^{-1} C = \kappa A^T E C - C^T E C$$ Since $\rk(C) \le 1$, we have $C^T E C = \det(C) E = 0$. Using $A^T E C = 0$, we get $Q_q B^{-1} C = 0$. Because $Q_q$ and $B^{-1}$ are invertible, this implies $C = 0$. If $C = 0$, equation \eqref{3} implies $A^T E D = Q_q$. This is a contradiction because $Q_q$ has rank 2, whereas $A^T E D$ has rank at most 1. Therefore, the rank-one case is impossible. \quad $\square$
					\medskip
					
					The classification exhibits two distinct geometric regimes. The first family corresponds to converters with an invertible leading block and is an affine family over $\mathbf{GL}_2(\mathbb C)$. The second family arises from the degenerate case $A=0$. The intermediate rank-one regime is excluded by a rigidity argument combining skew-symmetry and rank constraints.

					\subsection{Interpretation of Theorem~\ref{th0}} We now explain geometrically why there are so many solutions when $n=2$ using an interpretation of the classification.
					
					Theorem~\ref{thm:impossibility} shows that for $n \ge 3$ the hypersurfaces
					$\{\P_q(X)=0\}$ and $\{\det(X)=0\}$ are not isomorphic.
					The case $n=2$ is special: in this dimension, both functions arise from
					nondegenerate bilinear forms $B_q$ and $B_{-1}$ on $\mathbb{C}^4$.
					In this setting, converting the $q$-permanent into the determinant amounts
					to describing the linear isometries
					\[
					\varphi : (\mathbb{C}^4, B_q) \to (\mathbb{C}^4, B_{-1}).
					\]
					
					Let $\mathcal{C}_q$ be the set of such linear converters. The group $\mathbf{O}(\mathbb{C}^4, B_{-1})$ acts on $\mathcal{C}_q$ by right multiplication, and this action is transitive. Consequently, $\mathcal{C}_q$ is a complex smooth algebraic variety of dimension six. From a projective point of view, the associated isotropic cones define quadrics in $\mathbb{P}^3$, and over $\mathbb{C}$ these quadrics are equivalent, which explains the existence of solutions in this case.
					
					Theorem~\ref{th0} gives a decomposition of $\mathcal{C}_q$ into two strata:
					$
					\mathcal{C}_q = \mathcal{C}_q^{(I)} \sqcup \mathcal{C}_q^{(II)},
					$
					corresponding to two orbits distinguished by the rank of the leading block $A$. The first stratum is open and dense; it is six-dimensional and corresponds to an invertible $A$. In this case, the relation~\eqref{acc} ensures the compatibility of the two quadratic structures. The second stratum
					lies on the boundary; it has dimension five and corresponds to $A=0$.
					Geometrically, this can be interpreted as a reflection exchanging the row
					and column spaces. This gives a description of the
					$\mathbf{O}(4,\mathbb{C})$-orbits inside $\mathbf{GL}_4(\mathbb{C})$.
					
					When $n \ge 3$, the sets $\{\P_q(X)=0\}$ and $\{\det(X)=0\}$ are degree-$n$ hypersurfaces rather than quadrics. Their symmetry groups are smaller, and the geometry becomes more rigid. In this regime, no analogous conversion can be expected.
					
					\begin{remark} Theorem~\ref{th0} extends naturally to the general conversion problem for arbitrary parameters $q$ and $q^{\prime}$. By characterizing maps satisfying (1.3), we find that the geometric stratification of the converter space remains unchanged.
					\end{remark}
					
					\section{Structure and Geometry of Preserver Exponents} \label{s3}
					If they exist, any \(q\)-permanent converter is determined up to a preserver. In the following theorem, we describe the set of preserver exponents, that is, the exponents for which the Schur multiplier \eqref{dil} belongs to the group $\mathcal{G}_q$. As this set depends on the modulus of the complex parameter $z$, we introduce the two sets:
					\begin{subequations}
						\begin{align}
							\label{Rn}
							\mathcal{R}_n &= \left\{R \in \mathbb{R}^{n \times n} \colon \forall z \in \mathbb{C} \! \setminus \! \mathbb{S}^1 , \P_q(z^R \circ A) = \P_q(A)\right\}, \\
							\label{Sn}
							\mathcal{S}_n(z) &= \left\{R \in \mathbb{R}^{n \times n} \colon \P_q(z^R \circ A) = \P_q(A)\right\}, \quad \text{for } z \in \mathbb{S}^1.
						\end{align}
					\end{subequations}
					
					\begin{theorem}[Properties of preserver exponents] 
						\label{thm:preserver_properties}
						The space $\mathcal{R}_n$ of preserver exponents has the following properties:
						\begin{enumerate}
							\item $\mathcal{R}_n$ is a vector subspace of the special linear Lie algebra $\mathfrak{sl}_n(\mathbb{R})$.
							\item $\mathcal{R}_n$ is invariant under transposition and under permutations of rows and columns.
							\item Every matrix $R=[r_{i,j}]$ in $\mathcal{R}_n$ can be written in the form $r_{i,j}=u_i+v_j$, where $u=(u_i)$ and $v=(v_j)$ are real vectors satisfying $ \sum_{i} u_i = -\sum_{j} v_j $.
							\item Any $R\in\mathcal{R}_n$ has rank at most $2$. For any nonzero scalar $z$, the matrix $z^R$ has rank $1$.
							\item The dimension of $\mathcal{R}_n$ is $2n-2$.
							\item The subspace $\mathcal{R}_n$ is closed under the symmetric ternary product $(A,B,C) \mapsto ABC + CBA$. In particular, every matrix $R \in \mathcal{R}_n$ satisfies $R^3 \in \mathcal{R}_n$.
							
							\item For a fixed $z=e^{2\pi i\theta}$, where $\theta\in(0,1)$ and $\theta\neq\frac12$, the set $\mathcal{S}_n(z)$ of preserver exponents has the discrete affine decomposition
							\begin{equation}\label{folia}
								\mathcal{S}_n(z) = \bigcup_{\mathbf{k}\in\mathcal{K}} \left( R_0(\mathbf{k},\theta)+\mathcal{R}_n \right),
							\end{equation}
							where $\mathcal{K}\cong\mathbb{Z}^{\,n^2-2n+2}$ is a compatibility lattice and $R_0(\mathbf{k},\theta)$ is a particular solution satisfying $ \tr_\sigma\!\left(R_0(\mathbf{k},\theta)\right) = \frac{k_\sigma}{\theta}$ for all $\sigma\in\mathfrak{S}_n$.
							\item As $z\to 1$ along the unit circle, the Euclidean distance to any non-principal sheet \emph{(}$\mathbf{k}\neq\mathbf{0}$\emph{)} in the decomposition diverges as $\mathcal{O}(1/|\theta|)$. In this limit the compatibility lattice recedes to infinity, and the space reduces to the continuous regime:
							\[
							\lim_{z\to 1}\mathcal{S}_n(z)=\mathcal{R}_n .
							\]
						\end{enumerate}
					\end{theorem}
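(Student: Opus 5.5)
The plan is to reduce everything to a statement about $\sigma$-traces. Expanding $\P_q(z^R\circ A)$ monomial by monomial gives
\[
\P_q(z^R\circ A)=\sum_{\sigma}q^{\ell(\sigma)}z^{\tr_\sigma(R)}\,\mathrm{wt}_\sigma(A).
\]
The monomials $\mathrm{wt}_\sigma(A)$ for distinct $\sigma$ are distinct, hence linearly independent polynomials in the entries of $A$. So $R$ preserves $\P_q$ at a given $z$ if and only if $z^{\tr_\sigma(R)}=1$ for every $\sigma\in\S_n$; since $q\neq0$, the factor $q^{\ell(\sigma)}$ plays no role.

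For $z\notin\mathbb S^1$ we have $|z|\neq 1$, so this forces $\tr_\sigma(R)=0$ for all $\sigma$. Hence $\mathcal R_n=\{R:\tr_\sigma(R)=0\ \forall\sigma\}$, which is the kernel of a family of linear functionals. This gives (1) at once, because $\sigma=\mathrm{id}$ yields trace zero. It also gives (2), since transposition and row/column permutations only permute the set of permutation diagonals.

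For (3) I would use the classical characterization that $\tr_\sigma(R)$ is independent of $\sigma$ if and only if $r_{ij}+r_{kl}=r_{il}+r_{kj}$ for all $i,k,j,l$ (compare $\sigma$ with $\sigma$ composed with a transposition). This discrete Monge/additivity condition is equivalent to $r_{ij}=u_i+v_j$: set $u_i=r_{i1}$ and $v_j=r_{1j}-r_{11}$. The constraint $\tr_{\mathrm{id}}(R)=\sum u_i+\sum v_j=0$ then gives the stated relation.

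The remaining finite-dimensional items follow from this form.
\begin{itemize}[leftmargin=20pt]
\item[(4)] $R=u\mathbf 1^T+\mathbf 1v^T$ has rank at most $2$, and $z^R=(z^{u_i})(z^{v_j})^T$ is an outer product, so it has rank $1$.
\item[(5)] The map $(u,v)\mapsto u\mathbf 1^T+\mathbf 1 v^T$ from $\mathbb R^{2n}$ has kernel $\{(c\mathbf 1,-c\mathbf 1)\}$ of dimension $1$, so its image has dimension $2n-1$. The single linear constraint cuts this to $2n-2$.
\item[(6)] Write $A=a\mathbf 1^T+\mathbf 1a'^T$ and similarly for $B$ and $C$. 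Expanding $ABC+CBA$ using $\mathbf 1^T\mathbf 1=n$, every term has the form $x\mathbf 1^T$, $\mathbf 1 y^T$, or a mixed term $a\,(\ldots)\,c'^T$. The mixed terms from $ABC$ and $CBA$ are of the form $\kappa\,(a c'^T+c a'^T)$, with the same scalar $\kappa=\mathbf 1^T b$ appearing in both. These are not a priori additive, and this is the step I expect to be the main obstacle. The cleanest route is to check directly that $\tr_\sigma$ of the symmetrized product is independent of $\sigma$ and vanishes, using the trace-zero relation $\sum u+\sum v=0$ for each factor. If this fails, one restricts to showing that the offending rank-two piece has constant $\sigma$-trace. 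The case $R^3$ is the specialization $A=B=C$.
\end{itemize}

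For (7), with $z=e^{2\pi i\theta}$ the condition $z^{\tr_\sigma(R)}=1$ becomes $\theta\,\tr_\sigma(R)\in\Z$ for every $\sigma$. Write $\tr_\sigma(R)=k_\sigma/\theta$. The image of $R\mapsto(\tr_\sigma R)_\sigma$ has dimension $n^2-(2n-2)=n^2-2n+2$, because its kernel is $\mathcal R_n$. The admissible integer vectors $\mathbf k$ are those lying in this image, and they form a lattice $\mathcal K$ of that rank, since the image is defined over $\mathbb Q$. For each $\mathbf k\in\mathcal K$ one picks a particular solution $R_0(\mathbf k,\theta)$ by a fixed rational right inverse scaled by $1/\theta$. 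The solution set is then the union of the cosets $R_0(\mathbf k,\theta)+\mathcal R_n$.

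For (8), the distance from $\mathcal R_n$ to the coset indexed by $\mathbf k\neq\mathbf 0$ equals the norm of the component of $R_0$ orthogonal to $\mathcal R_n$. This norm is bounded below by $c\|\mathbf k\|/|\theta|\ge c/|\theta|$, with $c>0$ the smallest singular value of the trace map on the orthogonal complement. This bound gives the divergence, and the limit statement follows in the sense of local Hausdorff convergence.
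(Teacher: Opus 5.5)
Your treatment of items (1)--(5), (7) and (8) follows the paper's route and is fine. The shared steps are the reduction to $\tr_\sigma(R)=0$, the Monge condition obtained by comparing $\sigma$ with $\sigma$ composed with a transposition, rank--nullity for the trace map, and a lower bound on $\|R\|$ from boundedness of the functional $\tr_\sigma$. Your justification that $\mathcal K$ has rank $n^2-2n+2$, because the image of the trace map is a rational subspace, is slightly cleaner than the paper's.

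The genuine gap is item (6): you never prove it. The obstacle you describe comes from a miscomputation. In $ABC$ the mixed term arises as
\[
(a\mathbf 1^T)\,B\,(\mathbf 1c'^T)=(\mathbf 1^TB\mathbf 1)\,ac'^T,
\qquad
\mathbf 1^TB\mathbf 1=n\bigl(\mathbf 1^Tb+\mathbf 1^Tb'\bigr)=n\tr(B)=0 .
\]
So the coefficient is $n\tr(B)$, not $\mathbf 1^Tb$. You dropped the contribution of the $\mathbf 1b'^T$ part of $B$, and the mixed term in fact vanishes identically for $B\in\mathcal R_n$. Your fallback, checking that $\tr_\sigma$ of the product is independent of $\sigma$ and vanishes, is only a restatement of membership in $\mathcal R_n$. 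As written, (6) is not established.

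The missing step is the paper's normalization. Since $\sum_i u_i+\sum_j v_j=0$, shift $u\mapsto u-c\mathbf 1$ and $v\mapsto v+c\mathbf 1$ with $c=\tfrac1n\mathbf 1^Tu$. This gives $\mathbf 1^Tu=\mathbf 1^Tv=0$ for each factor. With $A=u_a\mathbf 1^T+\mathbf 1v_a^T$ (and similarly $B$, $C$) normalized, one gets $AB=n\,u_av_b^T+(v_a^Tu_b)\mathbf 1\mathbf 1^T$, and hence
\[
ABC=n(v_b^Tu_c)\,u_a\mathbf 1^T+n(v_a^Tu_b)\,\mathbf 1v_c^T .
\]
This matrix is additive, and both coefficient vectors sum to zero, so its trace vanishes too; by (3) it lies in $\mathcal R_n$. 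Note that you need this zero-trace part as well as additivity.

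Thus even the unsymmetrized product $ABC$ stays in $\mathcal R_n$, and $R^3\in\mathcal R_n$ is the case $A=B=C$. The paper reaches the same conclusion differently: it polarizes, $ABC+CBA=(A+C)B(A+C)-ABA-CBC$, and then carries out this computation only for $ABA$.
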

					\noindent \textbf{Proof.} Let $R = [r_{i,j}] \in \mathcal{R}_n$ be a universal preserver exponent of the $q$-permanent. The relation $z^{\tr_\s(R)} = 1$ must hold for every permutation $\sigma \in \mathfrak{S}n$. If $|z| \neq 1$, then
					\begin{equation} \label{eq:hom_system}
						\tr_\sigma(R) = 0 \qquad \text{for all } \sigma \in \mathfrak{S}_n,
					\end{equation}
					leading to a homogeneous linear system of $n!$ equations in $n^2$ unknowns.
					
					We now state the theorem.
					\begin{enumerate}[label=\arabic*.,leftmargin=20pt]
						\item One can readily check that $\mathcal{R}_n$ is a real vector space. Setting $\sigma = \mathrm{id}$ in \eqref{eq:hom_system} yields $\tr(R) = 0$. Hence, $R \in \mathfrak{sl}_n(\mathbb{R})$.

						\item For any fixed permutations $\tau, \nu \in \mathfrak{S}_n$, we observe that $$\tr_\sigma(P_\tau R) = \tr_{\sigma\tau^{-1}}(R) = 0, \quad   \tr_\sigma(R P_\nu) = \tr_{\nu^{-1}\sigma}(R) = 0, \quad \tr_\sigma(R^\mathsf{T}) = \tr_{\sigma^{-1}}(R) = 0.$$ These relations confirm that the space $\mathcal{R}_n$ is invariant under left and right multiplication by permutation matrices and under transposition.

						\item Let $i_1, i_2$ be distinct row indices and $a, b$ be distinct column indices. By constructing two permutations $\pi_0, \pi_1 \in \mathfrak{S}_n$ that differ only on these indices such that $\pi_0(i_1) = \pi_1(i_2) = a$ and $\pi_0(i_2) = \pi_1(i_1) = b$, the difference of the corresponding trace equations yields:
						\be \label{monge} r_{i_1,a} + r_{i_2,b} = r_{i_1,b} + r_{i_2,a}. \ee
						which is a necessary and sufficient condition for $R$ to admit a decomposition of the form $r_{ij} = u_i + v_j$ for some real vectors $\mathsf{u}=(u_i)_{i=1}^n$, $\mathsf{v}=(v_j)_{j=1}^n$. The condition $\tr(R)=0$ then implies $\sum_{i=1}^n (u_i + v_i) = 0$.

						\item Using the last property, we can write the matrix $R$ in the form $R= \mathsf{u}\mathbf{1}^T + \mathbf{1}\mathsf{v}^T$, where $u, v \in \mathbb{R}^n$ and $\mathbf{1}$ is the all-ones column vector, hence $\rk(R) \le 2$. Furthermore, the matrix $z^R$ has entries $[z^R]_{ij} = z^{u_i} z^{v_j}$. It follows that $z^R$ is a rank-one matrix, being the outer product of the vectors $(z^{u_i})_{i=1}^n$ and $(z^{v_j})_{j=1}^n$. This ensures that $0$ is an eigenvalue of $z^R$ with multiplicity $n-1$.
						\item Let $\psi: \mathbb{R}^n \times \mathbb{R}^n \to \mathbb{R}^{n \times n}$ be defined by $\psi(\mathsf{u}, \mathsf{v}) = [u_i + v_j]$. Obviously, the kernel of $\psi$ is $\operatorname{span}\{(\mathbf{1}, -\mathbf{1})\}$ and therefore $\dim(\operatorname{Im}(\psi)) = 2n-1$. Since $\tr(R)=0$ defines a non-trivial linear functional on this image, we conclude that $\dim \mathcal{R}_n = (2n-1) - 1 = 2n-2$.		
						\item We prove that $\mathcal{R}_n$ is closed under the symmetric ternary product. By a standard polarization argument, it suffices to show that for any $A,B\in\mathcal{R}_n$, $ABA\in \mathcal{R}_n$.
						
						We write $R \in \mathcal{R}_n$ as $R = \mathsf{u} \mathbf{1}^T + \mathbf{1} \mathsf{v}^T$. By applying the translation $u \leftarrow u - c\mathbf{1}$ and $v \leftarrow v + c\mathbf{1}$ with $c = \frac{1}{n} \mathbf{1}^T u$ is the mean of the coefficients $u_i$, we can normalize this representation such that the vectors are trace-free: $\mathbf{1}^T u = \mathbf{1}^T v = 0$.
						
						Let $A= u_a \mathbf{1}^T + \mathbf{1} v_a^T$ and $B= u_b \mathbf{1}^T + \mathbf{1} v_b^T$ two preserver components with normalized representations. Using $\mathbf{1}^T u_b = v_a^T \mathbf{1} = 0$, along with $\mathbf{1}^T \mathbf{1} = n$, we compute $AB$:
						$$ AB = (u_a \mathbf{1}^T + \mathbf{1} v_a^T)(u_b \mathbf{1}^T + \mathbf{1} v_b^T) = n u_a v_b^T + (v_a^T u_b) \mathbf{1} \mathbf{1}^T. $$ Right-multiplying this result by $A$ and applying $\mathbf{1}^T u_a = 0$ and $v_b^T \mathbf{1} = 0$, the cross-terms vanish:
						$$ ABA = n(v_b^T u_a) u_a \mathbf{1}^T + n(v_a^T u_b) \mathbf{1} v_a^T= U \mathbf{1}^T + \mathbf{1} V^T, $$
						where $U= n(v_b^T u_a) u_a$ and $V= n(v_a^T u_b)v_a$. The initial normalizations $\mathbf{1}^T u = 0$ and $\mathbf{1}^T v = 0$ imply $\mathbf{1}^T U + \mathbf{1}^T V = 0$. Hence, by the property 3., $ABA \in \mathcal R_n$. Evaluating this ternary product for $A=B=R$ directly implies $R^3 \in \mathcal{R}_n$. 
						
						\item Let $z=e^{2\pi i\theta}$, where $\theta\in(0,1)$ and $\theta\ne\frac12$. 
						The condition $z^{\tr_\sigma(R)}=1$ is equivalent to
						\[
						\tr_\sigma(R)=\frac{k_\sigma}{\theta},\qquad k_\sigma\in\mathbb Z,
						\]
						which yields a non-homogeneous linear system. Its coefficient matrix 
						$\mathcal A_n\in\{0,1\}^{n!\times n^2}$ is the permutation–entry incidence matrix: the row indexed by $\sigma$ has 1's in the positions $(i,\sigma(i))$. The homogeneous system $\mathcal{A}_n \mathrm{vec}(R) = 0$, is exactly the condition $\tr_\sigma(R) = 0$ for all $\sigma$, which defines the subspace $\mathcal{R}_n$. By the rank-nullity theorem, \footnote{~The algebraic constraints defining $\mathcal{R}_n$ have an unexpected interpretation via competition graphs. Viewing an $n \times n$ matrix as the edge set of the complete bipartite graph $K_{n,n}$, permutations correspond to perfect matchings. The rank of $\mathcal{A}_n$ equals the competition number $k(K_{n,n})$ found by Roberts~\cite{rob}. In the ecological setting from which these graphs arise, $k(G)$ is the minimum number of isolated niches needed to build a valid acyclic food web.} \[
						\rk(\mathcal A_n)=n^2-\dim\mathcal R_n=(n-1)^2+1.
						\] This is the exact number of linearly independent constraints. Let $\mathcal K$ denote the set of integer vectors 
						$\mathbf k=(k_\sigma)_\sigma$ for which the system is consistent. 
						Equivalently,
						$
						\rk(\mathcal A_n\,|\,\mathbf k)=n^2-2n+2.
						$
						For each $\mathbf k\in\mathcal K$ there exists a particular solution 
						$R_0(\mathbf k,\theta)$. Hence the solution set $\mathcal S_n(z)$ is a discrete family of affine spaces parallel to $\mathcal R_n$.

						\item Consider the limit $z\to 1$ (i.e., $\theta\to 0$). Let $R\in\mathcal S_n(z)$ belong to a non-principal sheet. Then there exists $\sigma\in\mathfrak S_n$ with $|k_\sigma|\ge1$. Since $R\mapsto\tr_\sigma(R)$ is a linear functional on $\mathbb R^{n\times n}$ and every linear operator defined on a finite-dimensional normed vector space is bounded, there exists $C>0$ such that $|\tr_\sigma(R)|\le C\|R\|$ for any matrix norm $\|\cdot\|$. Hence,
						\[
						\|R\|\ge \frac{|k_\sigma|}{C|\theta|}.
						\]
						As \(\theta \to 0\), the bound diverges to \(+\infty\). Thus, for each nonzero sheet index \(\mathbf{k}\neq\mathbf0\), the corresponding branch escapes to infinity with growth \(\mathcal O(|\theta|^{-1})\). Hence, in the limit \(\theta\to0\), only the principal sheet \(\mathbf{k}=\mathbf0\) stays bounded, and the solution space reduces to \(\mathcal R_n\).
					\end{enumerate}
					This completes the proof.  \quad $\square$
					
					The subspace $\mathcal{R}_n$ is not closed under the standard Lie bracket or standard matrix multiplication. However, Property 6 of Theorem 3.1 establishes that it is stable under the symmetric ternary product $(A, B) \mapsto ABA$.

					\begin{example}[The preserver space on $\mathbb{S}^1$ for $n=3$] For $n=3$, ordering the permutations as $\s_0 \coloneqq \mathrm{id}$, $\s_1 \coloneqq (23)$, $\s_2 \coloneqq (12)$, $\s_3 \coloneqq (123)$, $\s_4 \coloneqq (132)$, $\s_5 \coloneqq (13)$, the set of preservers on the unit circle is characterized by the non-homogeneous system:
						$\tr_{\s_i}(R) = k_{\s_i}/\theta$ ($k_{\s_i} \in \Z$). For this system to be consistent, it is necessary that $k_{\s_0}+k_{\s_3}+k_{\s_4}=k_{\s_1}+k_{\s_2}+k_{\s_5}$. So, we must satisfy the compatibility condition:
						$$\sum_{\text{$\s$ even}} k_\s = \sum_{\text{$\s$ odd}} k_\s.$$ Thus, the total space of preserver exponents at $z\in \mathbb{S}^1$ is \[ \mathcal{S}_n(z) = \bigcup_{\mathbf{k} \in \mathbb{Z}^5} \left( R_0(\mathbf{k}, \theta) + \mathcal{R}_3 \right),\] where the particular solution is
						{\small \[R_0(\mathbf{k},\theta)=\frac{1}{\theta}\begin{bmatrix}
								0&0&0\\
								0&k_1-k_3&k_2-k_5 \\
								-k_1+ k_3+ k_6 & k_5 & k_3	
							\end{bmatrix}.\]}

						In general, a closed-form particular solution $R_0(\mathbf{k},\theta)$ is impractical for $n \ge 4$, since the vector $\{k_\sigma\}$ must satisfy $n! - n^2 + 2n - 2 \ge 14$ independent linear constraints.  
					\end{example}
					We can exhibit a simple basis of $\mathcal{R}_n$ consisting of the matrices $R_i^{(n)}$ (for $i=1,\dots,n$) and $S_j^{(n)}$ (for $j=2,\dots,n-1$), defined entry-wise by:
					$$
					(R_i^{(n)})_{a,b} = 
					\begin{cases} 
						1, & \text{if } b=i \text{ and } a \ne n, \\ 
						-1, & \text{if } a=n \text{ and } b \ne i, \\ 
						0, & \text{otherwise},
					\end{cases}
					\qquad \text{and} \qquad
					(S_j^{(n)})_{a,b} = 
					\begin{cases} 
						1, & \text{if } a=j, \\ 
						-1, & \text{if } a=n, \\ 
						0, & \text{otherwise}.
					\end{cases}
					$$
					For instance, for $n=3$, this basis is
					$$
					R_1^{(3)} = \begin{bmatrix} 1 & 0 & 0 \\ 1 & 0 & 0 \\ 0 & -1 & -1 \end{bmatrix}, \
					R_2^{(3)} = \begin{bmatrix} 0 & 1 & 0 \\ 0 & 1 & 0 \\ -1 & 0 & -1 \end{bmatrix}, \
					R_3^{(3)} = \begin{bmatrix} 0 & 0 & 1 \\ 0 & 0 & 1 \\ -1 & -1 & 0 \end{bmatrix}, \
					S_2^{(3)} = \begin{bmatrix} 0 & 0 & 0 \\ 1 & 1 & 1 \\ -1 & -1 & -1 \end{bmatrix}.
					$$

					\subsection{Algebraic and combinatorial interpretations}
					\label{sec:geometry}
					
					We now interpret the preserver spaces $\mathcal{R}_n$ and $\mathcal{S}_n(z)$ from Theorem~\ref{thm:preserver_properties}.
					\medskip
					
					\noindent \textbf{The Monge condition.}
					The additive property (3) in Theorem~\ref{thm:preserver_properties} shows that the space $\mathcal{R}_n$ is characterized by a Monge-type condition: for any matrix $R=[r_{i,j}] \in \mathcal{R}_n$, the cross-difference $r_{i,j}+r_{k,l}-r_{i,l}-r_{k,j}$ vanishes for all indices $i, j, k, l$. In combinatorial optimization, this condition ensures that $R$ behaves strictly additively~\cite{burkard1996}.
					
					\medskip
					
					\noindent \textbf{Duality and mass-conserving flows.} In the proof, we showed that the preserver components satisfy the trace identity $\operatorname{Tr}_\sigma(R) = \langle P_\sigma, R \rangle_F = 0$, where $\langle \cdot, \cdot \rangle_F$ is the standard Frobenius inner product. Thus, $\mathcal{R}_n$ is the orthogonal complement of the span of permutation matrices in $\mathbb{R}^{n \times n}$. By Birkhoff's theorem, $\mathcal{R}_n^\perp$ is the set of matrices with uniform row and column sums. Interpreting an $n \times n$ matrix as the edge set of $K_{n,n}$, the relation $r_{i,j} = u_i + v_j$ identifies $\mathcal{R}_n$ with node potentials, while $\mathcal{R}_n^\perp$ corresponds to mass-conserving circulations (cycles) in the associated flow network \cite{ahuja1993}.
					
				\medskip
				
				\noindent \textbf{Algebraic rank condition.} The preserver space $\mathcal{R}_n$ imposes strong structural constraints. While any matrix $R \in \mathcal{R}_n$ has rank at most 2, the entry-wise exponentiated matrix $z^R$ is strictly of rank one. This rank-one property is fundamental to ensuring that the matrix entries factorize as $[z^R]_{ij} = z^{u_i} z^{v_j}$, which is the core mechanism allowing the $q$-permanent to remain invariant under the Schur multiplier transformation.
					
					\section{Exact Conversions for Hessenberg Matrices}\label{s4}
					Although universal conversion is impossible for $n \ge 3$, it is natural to ask whether such a conversion can still hold within structurally restricted classes of matrices. A simple example is triangular matrices, where
					$
					\P_q(A) = \prod_i a_{i,i} = \det(A),
					$
					for any $q$. Similarly, for certain sparse matrices, the $q$-permanent is reduced to a single determinantal term.
					
					A nontrivial example is given by Hessenberg matrices, defined by $a_{i,j}=0$ for $j>i+1$. Their nearly triangular form sharply limits the terms in the $q$-permanent expansion. We show that this limitation yields a positive solution to problem~\eqref{conv0} via a suitable Schur multiplier.
					
					The set of permutations compatible with the Hessenberg pattern is
					\[
					\mathfrak{S}_n^* \coloneqq \{\sigma\in \mathfrak{S}_n : \sigma(i)\le i+1\}.
					\]
					A direct combinatorial argument shows that
					$
					\# \mathfrak{S}_n^*=2^{\,n-1}
					$
					(see~\cite{bru1}) Thus, the number of contributing permutations decreases from $\mathcal{O}(n!)$ growth of the entire symmetric group to $2^{n-1}$. This helps to explain why the Hessenberg structure admits an explicit linear conversion. 
					
					Let $\mathfrak{H}_n$ be the subspace of $n\times n$ lower Hessenberg matrices. Let $q, z \in \mathbb{C} \setminus \{0, 1\}$. We first define the spaces of permanent and determinant converter exponents as
					\begin{align*}
						\mathcal{H}^+_n
						&\coloneqq
						\left\{
						H \in \mathbb{R}^{n \times n} :
						\P_q(A)=\per\bigl(q^{H}\circ A\bigr)
						\ \text{for all } A\in\mathfrak{H}_n
						\right\}, \\
						\mathcal{H}^-_n
						&\coloneqq
						\left\{
						H \in \mathbb{R}^{n \times n} :
						\P_q(A)=\det\bigl((-q)^{H}\circ A\bigr)
						\ \text{for all } A\in\mathfrak{H}_n
						\right\}.
					\end{align*}

					\begin{theorem}[Structure of Hessenberg converter exponents]
						\label{thm:hessenberg_converters}
						Let $q\in\mathbb{C}\setminus\{0,1\}$ and define
						\[
						[H_0]_{ij}=\delta_{j,i+1},
						\qquad
						\mathcal{R}_n^H
						=
						\mathcal{R}_n
						\oplus
						\operatorname{span}\{E_{ij}:j>i+1\}.
						\]
						Then $H_0$ satisfies both the permanent and determinant converter identities on $\mathfrak{H}_n$.
						
						\begin{enumerate}[label=\emph{\arabic*.},itemsep=2pt]
							
							\item If $|q|\neq1$, then
							$
							\mathcal{H}_n^+
							=
							\mathcal{H}_n^-
							=
							H_0+\mathcal{R}_n^H.
							$
							
							\item If $q=e^{2\pi i\theta}$ with $\theta\in(0,1)\setminus\{\frac12\}$, then
							\be \label{H+}
							\mathcal{H}_n^+
							=
							\bigcup_{k\in\mathcal{K}_H}
							\left(
							H_0(k,\theta)+\mathcal{R}_n^H
							\right),
							\ee
							where $\mathcal K_H \subset \Z^{2^{n-1}}$ and
							\be \label{mod}
							\operatorname{tr}_\sigma(H_0(k,\theta))
							=
							\ell(\sigma)+\frac{k_\sigma}{\theta},
							\qquad
							\sigma\in\mathfrak{S}_n^*.
							\ee
							Moreover,
							\begin{equation} \label{eq:H_minus_subset}
								\mathcal{H}_n^- = \left\{ H \in \mathcal{H}_n^+ : \tr_\s(H) \equiv \ell(\s)\!\!\!\!\!\pmod2, \; \forall \sigma \in \mathfrak{S}_n^* \right\}.
							\end{equation}
							
							\item As $q\to1$ along the unit circle,
							\[
							\mathcal{H}_n^+
							\longrightarrow
							H_0+\mathcal{R}_n^H,
							\]
							while every non-principal sheet diverges like $\mathcal{O}(|1-q|^{-1})$.
							
						\end{enumerate}
					\end{theorem}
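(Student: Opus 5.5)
The first step is a combinatorial identity on $\mathfrak{S}_n^*$ that makes $H_0$ work. A permutation $\sigma$ with $\sigma(i)\le i+1$ for all $i$ is a product of disjoint cycles supported on consecutive blocks $\{k,k+1,\dots,m\}$, each acting as $i\mapsto i+1$ for $k\le i<m$ and $m\mapsto k$. I would verify this by induction on $n$: look at where $n$ goes, or which $i$ satisfies $\sigma(i)=n$.

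On such a block the only inversions are the pairs $(i,m)$ with $k\le i<m$, so the block contributes $m-k$ inversions. This equals the number of indices $i$ in the block with $\sigma(i)=i+1$. Hence, for every $\sigma\in\mathfrak{S}_n^*$,
\[
\ell(\sigma)=\#\{i:\sigma(i)=i+1\}=\tr_\sigma(H_0),\qquad \sgn(\sigma)=(-1)^{\ell(\sigma)} .
\]
For $A\in\mathfrak{H}_n$, only $\sigma\in\mathfrak{S}_n^*$ contribute to either side. Therefore
\[
\per(q^{H_0}\circ A)=\sum_{\sigma\in\mathfrak{S}_n^*}q^{\tr_\sigma(H_0)}\mathrm{wt}_\sigma(A)=\P_q(A),
\]
and
\[
\det((-q)^{H_0}\circ A)=\sum_{\sigma\in\mathfrak{S}_n^*}\sgn(\sigma)(-1)^{\ell(\sigma)}q^{\ell(\sigma)}\mathrm{wt}_\sigma(A)=\P_q(A).
\]

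Next, for a general $H$ I reduce the identity to conditions on the $\sigma$-traces. The monomials $\mathrm{wt}_\sigma(A)$ for $\sigma\in\mathfrak{S}_n^*$ are distinct monomials in the free entries of $A$. So $H\in\mathcal{H}_n^+$ if and only if $q^{\tr_\sigma(H)}=q^{\ell(\sigma)}$ for all $\sigma\in\mathfrak{S}_n^*$. Entries $h_{ij}$ with $j>i+1$ never occur in these traces, which gives the free summand $\operatorname{span}\{E_{ij}:j>i+1\}$.

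For $|q|\ne1$, exactly as in Theorem~\ref{thm:preserver_properties}, the condition becomes the homogeneous system $\tr_\sigma(H-H_0)=0$ for $\sigma\in\mathfrak{S}_n^*$, restricted to the Hessenberg support. For $\mathcal{H}_n^-$ the same argument applies: we need $\sgn(\sigma)(-q)^{\tr_\sigma(H)}=q^{\ell(\sigma)}$. With $|q|\neq 1$ and the moduli forcing $\tr_\sigma(H)=\ell(\sigma)$, the signs then agree automatically, so $\mathcal{H}_n^-=\mathcal{H}_n^+$.

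The main obstacle is showing that the kernel of this restricted system is precisely the restriction of $\mathcal{R}_n$, i.e., of the potentials $u_i+v_j$ with zero trace. I would prove that every kernel element satisfies the Monge relation~\eqref{monge} on the support, by induction on $n$. Choose pairs of Hessenberg permutations that differ by a single block merge or split. For example, compare the identity on $\{k,k+1\}$ with the transposition $(k\,k+1)$; more generally, compare a block $\{k,\dots,m\}$ with its split into $\{k,\dots,m-1\}$ and $\{m\}$, keeping the rest fixed. Subtracting the corresponding equations gives local relations between adjacent entries. Then I would propagate potentials row by row, using connectivity of the support viewed as a bipartite graph. As a sanity check, $n=3$ gives 8 unknowns, 4 independent equations and a kernel of dimension $4=2n-2$. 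In general I would confirm the rank count $n(n+1)/2-n+1$ of the incidence system of $\mathfrak{S}_n^*$ on the support.

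For $q=e^{2\pi i\theta}$, the condition $q^{\tr_\sigma(H)}=q^{\ell(\sigma)}$ is equivalent to $\tr_\sigma(H)=\ell(\sigma)+k_\sigma/\theta$ with $k_\sigma\in\Z$. Here $\mathcal{K}_H\subset\Z^{2^{n-1}}$ is the set of $k$ for which this system is consistent. Taking a particular solution $H_0(k,\theta)$ and adding the kernel found above gives~\eqref{H+}, word for word as in item 7 of Theorem~\ref{thm:preserver_properties}.

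For the determinant case, write $(-q)^{t}=(-1)^{t}q^{t}$, which is the branch convention implicit in the definition. Then the condition becomes $(-1)^{\tr_\sigma(H)+\ell(\sigma)}=1$ together with the permanent condition, which is~\eqref{eq:H_minus_subset}.

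Finally, item 3 follows from the same bounded-functional estimate as item 8 of Theorem~\ref{thm:preserver_properties}. If $k_\sigma\ne0$, then
\[
\|H\|\ge\frac{|\ell(\sigma)+k_\sigma/\theta|}{C}\gtrsim\frac{1}{|\theta|}\sim\frac{1}{|1-q|},
\]
so every non-principal sheet diverges at this rate and only the sheet $H_0+\mathcal{R}_n^H$ survives as $q\to1$.
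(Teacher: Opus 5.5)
\textbf{Parts that work.} Your $H_0$ verification via the block-cycle structure of $\mathfrak{S}_n^*$, the reduction to trace conditions, and the $|q|\neq1$ case follow the paper's route. Your merge/split step supplies something the paper only asserts, namely that the kernel is exactly $\mathcal{R}_n^H$. Splitting $\{k,\dots,m\}$ into $\{k,\dots,m-1\}\cup\{m\}$ gives $x_{m-1,k}+x_{m,m}=x_{m-1,m}+x_{m,k}$ for $1\le k<m$. These relations let you build $u_i,v_j$ row by row on the staircase support, and $\sigma=\mathrm{id}$ then gives trace zero. That part is sound.

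\textbf{The gap: the last step for $\mathcal{H}_n^-$ on the unit circle.} With your convention $(-q)^t=e^{i\pi t}q^t=e^{i\pi(1+2\theta)t}$, the condition for each $\sigma\in\mathfrak{S}_n^*$ is the single equation $e^{i\pi(\tr_\sigma(H)+\ell(\sigma))}\,q^{\tr_\sigma(H)-\ell(\sigma)}=1$. Equivalently, $(1+2\theta)\bigl(\tr_\sigma(H)-\ell(\sigma)\bigr)\in2\mathbb{Z}$. A product of two unimodular numbers can equal $1$ without each factor being $1$. So you cannot split this into ``permanent condition plus parity'': your argument only proves the inclusion $\supseteq$ in \eqref{eq:H_minus_subset}.

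The reverse inclusion is actually false. Take $H=H_0+\frac{2}{1+2\theta}E_{11}$. Then $\tr_\sigma(H)-\ell(\sigma)\in\{0,\frac{2}{1+2\theta}\}$ for every $\sigma\in\mathfrak{S}_n^*$, and $(-q)^{2/(1+2\theta)}=e^{2\pi i}=1$, so $H\in\mathcal{H}_n^-$. But $q^{2/(1+2\theta)}=e^{4\pi i\theta/(1+2\theta)}\neq1$, because $0<\frac{2\theta}{1+2\theta}<\frac23$. Hence $H\notin\mathcal{H}_n^+$, and $\tr(H)=\frac{2}{1+2\theta}$ is not even an integer. Any other fixed logarithm $i\pi(1+2\theta+2m)$ of $-q$ gives the same phenomenon for generic $\theta$.

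The paper's proof makes the same tacit assumption $H\in\mathcal{H}_n^+$ when it writes the condition as $(-1)^{\ell(\sigma)+\tr_\sigma(H)}=1$. So this part of the statement cannot be proved as written. What is true is the inclusion. Alternatively, $\mathcal{H}_n^-$ is a union of its own sheets, cut out by $\tr_\sigma(H)-\ell(\sigma)\in\frac{2}{1+2\theta}\mathbb{Z}$.

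\textbf{A smaller point in item 3.} Your estimate $\frac{1}{|\theta|}\sim\frac{1}{|1-q|}$ only covers $\theta\to0^+$. With $\theta\in(0,1)$, $q\to1$ also allows $\theta\to1^-$. There, sheets such as $H_0+\frac1\theta E_{11}+\mathcal{R}_n^H$ stay at bounded distance from the principal sheet. So the limit claim needs that one-sided restriction, or a branch of $\log q$ centered at $\theta=0$.
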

					\noindent \textbf{Proof.} Let $A \in \mathfrak{H}_n$ be a lower Hessenberg matrix. By definition of $\mathfrak S_n^*$, the expansions of both the permanent and the determinant reduce to permutations $\sigma \in \mathfrak S_n^*$.
					
					We first show that $H_0 \in \mathcal{H}_n^+ \cap \mathcal{H}_n^-$. By definition, $[H_0]_{ij} = \delta_{j,i+1}$. Hence, for any $\sigma \in \mathfrak S_n^*$,
					\[
					\operatorname{Tr}_\sigma(H_0)
					= \sum_{i=1}^n [H_0]_{i,\sigma(i)}
					= \#\{\, i \in \{1,\dots,n-1\} : \sigma(i)=i+1 \,\}.
					\]
					For $\sigma \in \mathfrak S_n^*$, this quantity coincides with $\ell(\sigma)$ (see, for instance, \cite[Chapter~1]{bona}). It follows that
					$
					\P_q(A) = \per\bigl(q^{H_0} \circ A\bigr).
					$
					Moreover,
					\begin{align*}
						\det\bigl((-q)^{H_0} \circ A\bigr)
						&= \sum_{\sigma \in \mathfrak S_n^*} (-1)^{\ell(\sigma)} (-q)^{\operatorname{Tr}_\sigma(H_0)} \, \mathrm{wt}_\sigma(A)= \sum_{\sigma \in \mathfrak S_n^*} q^{\ell(\sigma)} \, \mathrm{wt}_\sigma(A) = \P_q(A),
					\end{align*}
					so that $H_0 \in \mathcal{H}_n^-$ as well.
					
					Let $H \in \mathbb{R}^{n\times n}$. Then $H \in \mathcal{H}_n^+$ if and only if
					$q^{\operatorname{Tr}_\sigma(H)} = q^{\ell(\sigma)},$ for all $\sigma \in \mathfrak S_n^*.$
					
					\medskip
					\noindent
					\textbf{Case $|q|\neq 1$.}
					In this case, the above equality is equivalent to
					$
					\operatorname{Tr}_\sigma(H) = \ell(\sigma),$ for all $\sigma \in \mathfrak S_n^*.$
					Hence $H \in \mathcal{H}_n^+$ if and only if $H \in \mathcal{H}_n^-$, i.e.,
					$
					\mathcal{H}_n^+ = \mathcal{H}_n^-.
					$
					Moreover,
					\[
					\tr_\s(H - H_0) = 0, \qquad \forall \sigma \in \mathfrak S_n^*,
					\]
					so that $H = H_0 \pmod{\mathcal R_n}$. This kernel is precisely
					\[
					\mathcal{R}_n \oplus \mathrm{span}\{E_{ij} : j > i+1\},
					\]
					since entries strictly above the superdiagonal do not contribute to any $\sigma$-trace. This yields the affine structure of $
					\mathcal{H}_n^+ = \mathcal{H}_n^-$.
					
					\medskip
					\noindent
					\textbf{Case $|q|=1$.}
					Write $q = e^{2\pi i \theta}$ with $\theta \in (0,1)$, $\theta \neq \tfrac12$. The condition becomes the modular linear system given in~\eqref{mod}, where $\mathbf{k}=(k_\sigma)_{\sigma \in \mathfrak S_n^*}$.
					
					The compatibility of this linear system is governed by the Rouché--Capelli theorem, and the set of admissible $\mathbf{k}$ forms a lattice $\mathcal{K}_H \subset  \mathbb{Z}^{2^{n-1}}$. For each $\mathbf{k} \in \mathcal{K}_H$, the solution set is a nonempty affine space of the form
					\[
					H_0(\mathbf{k},\theta) + \mathcal{R}_n^H,
					\]
					which proves that $\mathcal{H}_n^+$ is a countable union of affine subspaces parallel to $\mathcal R_n^H$.
					
					Finally, $H \in \mathcal{H}_n^-$ if and only if
					$
					(-1)^{\ell(\s)+\tr_\s(H)} = 1$, for all $\s \in \mathfrak S_n^*$, which is equivalent to the parity condition
					\[
					\tr_\s(H) \equiv \ell(\s) \pmod{2}.
					\]
					
					The proof of the asymptotic limit as $q \to 1$ along the circle is the same as for item 8 in Theorem~\ref{thm:preserver_properties}. This completes the proof. \quad $\square$
					
					\begin{remark} The arithmetic parity condition $k_\sigma \in 2\theta\mathbb{Z}$, given in~\eqref{eq:H_minus_subset} requires attention.
						\begin{itemize}[leftmargin=20pt, itemsep=5pt]
							\item If $\theta=p/r\in\mathbb{Q}$ with $\gcd(p,r)=1$, then $k_\sigma\in 2p\mathbb{Z}$ when $r$ is odd, and $k_\sigma\in p\mathbb{Z}$ when $r$ is even. In this case, $\mathcal{H}_n^-$ is a discrete family of affine sheets separated by multiples of $2p$ or $p$.
							
							\item If $\theta\notin\mathbb{Q}$, then $k_\sigma=0$ for all $\sigma$, and $\mathcal{H}_n^-$ reduces to the principal affine sheet $H_0+\mathcal{R}_n^H$.
					\end{itemize} \end{remark}
					
					Since the $q$-permanent of Hessenberg matrices can be converted to the determinant for all $q$, and the determinant can be computed in $\mathcal{O}(n^3)$ time, the resulting algorithm runs in $\mathcal{O}(n^3)$.
					
					\begin{example}[The Hessenberg converter exponent space on the unit circle for $n=3$] Let \(A \in \mathfrak{H}_3\) and $z= e^{2\pi i \theta}$. The four Hessenberg-compatible permutations are $\mathfrak{S}_3^* = \{\mathrm{id}, (23), (12), (123)\}$. Let $H = [h_{i,j}]$ be a solution of the modular system~\eqref{mod}. This system has eight relevant variables \(h_{i,j}\) (\(h_{1,3}\) is free because \(a_{1,3}=0\)), and its four equations are linearly independent. Thus, five free variables parameterize the solution space, which we take as \(h_{1,1} = a\), \(h_{1,2} = b\), \(h_{2,3} = c\), and \(h_{3,3} = d\) in addition to $h_{1,3}$. Solving the system yields a five-dimensional affine space with elements parametrized as:
						\begin{equation} \label{C+}
							\begin{bmatrix} 
								a & b & h_{1,3} \\[6pt]
								\frac{k_1}{\theta} + 1 - b - d & \frac{k_2}{\theta} - a - d & c \\[6pt]
								\frac{k_3}{\theta} + 2 - b - c & \frac{k_4}{\theta} + 1 - a - c & d 
							\end{bmatrix} \in  H_0(\mathbf{k},\theta) + \mathcal{R}_3^H
							,
						\end{equation} where \(H_0(\mathbf{k},\theta)\) is a particular solution with all free parameters set to zero. The space $\mathcal{H}^+_3$ is given by the union \eqref{H+} where the lattice of admissible integer vectors \(\mathcal{K}_H\) corresponds to \( \mathbb{Z}^4\). Moreover, a matrix $H \in \mathcal{H}^+_3$ belongs to $\mathcal{H}^-_3$ if and only if $k_i/\theta = 2m_i$ for some $m_i \in \mathbb{Z}$ for each $i \in \{1,2,3,4\}$. Substituting $2m_i$ into~\eqref{C+} removes the $\theta$-dependence of the entries. This provides the exact form of any determinant converter exponent. If $\theta$ is irrational, $\mathcal{H}^-_3$  reduces to the main continuous sheet $H_0 + \mathcal{R}_3^H$.  
					\end{example}
					The following proposition gives a dimensional threshold of the compatibility lattice $\mathcal{K}_H$:
					\begin{proposition} \label{prop:lattice_threshold}
						For $n \ge 6$, \mbox{$\mathcal{K}_H \subsetneq \mathbb{Z}^{2^{n-1}}$}. 
					\end{proposition}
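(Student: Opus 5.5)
The plan is to recast membership $\mathbf k\in\mathcal K_H$ as consistency of a linear system, and to show by counting that this system has more equations than its coefficient matrix has rank. Let $\mathcal A_n^*$ denote the restriction of the permutation–entry incidence matrix $\mathcal A_n$ of Theorem~\ref{thm:preserver_properties} to the rows indexed by $\sigma\in\mathfrak S_n^*$. By \eqref{mod}, $\mathbf k\in\mathcal K_H$ exactly when the system
\[
\mathcal A_n^*\,\mathrm{vec}(H)=\boldsymbol\ell+\tfrac1\theta\mathbf k,\qquad \boldsymbol\ell=(\ell(\sigma))_{\sigma\in\mathfrak S_n^*},
\]
is consistent. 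By Rouch\'e--Capelli, every right-hand side is attainable iff $\mathcal A_n^*$ has full row rank $2^{n-1}=\#\mathfrak S_n^*$. It therefore suffices to prove $\rk(\mathcal A_n^*)<2^{n-1}$ for $n\ge6$, and then to produce an explicit integer vector $\mathbf k$ that fails.

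First I bound the rank by the number of columns that can be nonzero. A column indexed by $(i,j)$ with $j>i+1$ is identically zero on $\mathfrak S_n^*$. The remaining positions are the lower triangle together with the superdiagonal, giving
\[
N_n=\tfrac{n(n+1)}2+(n-1)=\tfrac{n^2+3n-2}2 .
\]
Hence $\rk(\mathcal A_n^*)\le N_n$. For $n=6$ this gives $N_6=26<32=2^5$. For $n\ge6$, induction shows $2^{n-1}>N_n$, since $2^{n-1}$ doubles while $N_{n+1}-N_n=n+2<2^{n-1}$. As a sanity check, for $n=5$ we have $N_5=19>16$, so the crude bound does not settle $n=5$; this matches the threshold in the statement.

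Next I exhibit an inconsistent right-hand side. Since $\mathcal A_n^*$ has rational entries and rank less than its number of rows, there is a nonzero integer vector $\mathbf c$ with $\mathbf c^T\mathcal A_n^*=0$. Any consistent right-hand side $\mathbf b$ must then satisfy $\mathbf c^T\mathbf b=0$. Pick $\sigma_1$ with $c_{\sigma_1}\ne0$ and set $\mathbf k=m\,e_{\sigma_1}$ with $m\in\Z$. The condition becomes
\[
\mathbf c^T\boldsymbol\ell+\tfrac{m}{\theta}c_{\sigma_1}=0,
\]
which holds for at most one value of $m$. Any other $m$ gives $\mathbf k\in\Z^{2^{n-1}}\setminus\mathcal K_H$, so the inclusion is strict.

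The main obstacle is making sure the dimension count is honest, namely that the unknowns really are confined to the $N_n$ Hessenberg-relevant positions. This holds because the entries above the superdiagonal lie in $\mathcal R_n^H$ and never enter any $\operatorname{tr}_\sigma$ for $\sigma\in\mathfrak S_n^*$. Beyond that, a sharper rank count (for example subtracting the $2n-2$ dimensions of $\mathcal R_n$ that act trivially) is not needed: the crude bound $N_n<2^{n-1}$ already suffices for $n\ge6$.
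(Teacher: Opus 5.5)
Your proof is correct and follows the paper's argument: both bound the rank of the trace map $H\mapsto(\operatorname{tr}_\sigma(H))_{\sigma\in\mathfrak{S}_n^*}$ by the number $\tfrac12(n^2+3n-2)$ of lower Hessenberg positions and compare this with $2^{n-1}$ for $n\ge 6$. Your extra step makes explicit the passage from ``proper image'' to ``proper subset of $\mathbb{Z}^{2^{n-1}}$'', which the paper only asserts: you take an integer left-kernel vector $\mathbf{c}$ and show that $\mathbf{k}=m\,e_{\sigma_1}$ is admissible for at most one integer $m$.
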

					\noindent \textbf{Proof.} The compatibility lattice $\mathcal{K}_H$ is determined by the linear system \eqref{mod}. Define $\mathcal{L} : \mathbb{R}^{n \times n} \to \mathbb{R}^{2^{n-1}}$ by $\mathcal{L}(H) = (\tr_\sigma(H))_{\sigma \in \mathfrak{S}_n^*}$. Since permutations in $\mathfrak{S}_n^*$ never use entries strictly above the superdiagonal, $\mathcal{L}$ factors through the space of lower Hessenberg matrices, of dimension $d_n = n^2 - \binom{n-1}{2}=\frac{1}{2}(n^2+3n-2)$. Hence $\rk(\mathcal{L}) \le d_n$. Although surjectivity is not ruled out for $n=5$ ($d_5 = 19 > 16$), for $n=6$ we have $d_6 = 26 < 32$, and a simple induction shows $d_n < 2^{n-1}$ for all $n \ge 6$. Thus, for $n \ge 6$, $\mathrm{im}(\mathcal{L})$ is a proper subspace of $\mathbb{R}^{2^{n-1}}$, so the target integer vector $\mathbf{k}$ must satisfy additional linear constraints, and therefore $\mathcal{K}_H \subsetneq \mathbb{Z}^{2^{n-1}}$. \quad $\square$
					\medskip
					
					To make these dependencies explicit, we derive a linear relation among the lattice parameters $k_\sigma$ from a linear dependency among permutation matrices in $\mathfrak{S}_n^*$. Consider $\pi_1 = \mathrm{id}$, $\pi_2 = (12)(34)$, $\pi_3 = (12)$, and $\pi_4 = (34)$ in $\mathfrak{S}_4^*$. Their permutation matrices satisfy $P_{\pi_1} + P_{\pi_2} = P_{\pi_3} + P_{\pi_4}$. Hence any exponent matrix $H$ obeys
					$
					\tr_{\pi_1}(H) + \tr_{\pi_2}(H) - \tr_{\pi_3}(H) - \tr_{\pi_4}(H) = 0.
					$
					Using \eqref{mod} and the inversion lengths $\ell(\pi_1)=0$, $\ell(\pi_2)=2$, $\ell(\pi_3)=\ell(\pi_4)=1$, we get
					$
					k_{\mathrm{id}} + k_{(12)(34)} - k_{(12)} - k_{(34)} = 0.
					$
					This shows the $k_\sigma$ are not independent and serves as a low-dimensional prototype of the structural obstructions that increase for $n \ge 6$.
					
					\section{Permutational Symmetries and the Dihedral Threshold}\label{s5}
					The standard determinant and permanent are invariant, up to a sign, under row and column permutations. For the $q$-permanent, this symmetry is deformed: permuting the columns of a matrix requires an entry-wise rescaling combined with a transformation of the parameter $q$.
					
					For instance, using the standard inversion identity $\ell(\s \s_0) = \frac{n(n-1)}{2} - \ell(\s)$, it is straightforward to show the identity 
					\begin{equation} \label{duality} 
						\P_q(AP_{\s_0}) = q^{\frac{n(n-1)}{2}} \P_{q^{-1}}(A) = \P_{q^{-1}}(q^{\frac{n-1}{2}} A), \quad \text{for all} \; A \in \C^{n \times n}.
					\end{equation} This provides a direct solution to \eqref{gen} with $q'=q^{-1}$ and $\varphi(X) = q^{\frac{n-1}{2}} X P_{\sigma_0}$.
					
					\begin{definition}
						Let $\tau \in \mathfrak{S}_n$. The space of \textit{$\tau$-converter exponents} is defined as
						\begin{equation} \label{eq:tau_converter}
							\mathcal{C}^{(n)}(\tau) := \left\{ \Lambda \in \mathbb{R}^{n \times n} : \P_q(A P_\tau) = \P_r(q^\Lambda \circ A) \text{ for all } A \in \mathbb{C}^{n \times n} \right\},
						\end{equation}
						where $r \in \mathbb{C}^{\times}$ is a parameter depending on $q$ and $\tau$.
					\end{definition}
					
					The conversion identity~\eqref{duality} says that the space $\mathcal C^{(n)}(\s_0)$ is not empty for all $n$, and $r(q,\s_0)=q^{-1}$ universally: \[\frac{n-1}{2} J_n \in \mathcal C^{(n)}(\s_0), \quad  \forall n.\]
					The next theorem describes the structure of set~\eqref{eq:tau_converter}.
					\begin{theorem}[Structure of permutational converter exponents]
						\label{thm:permutational}
						Let $\tau \in \mathfrak{S}_n$ and \mbox{$q \notin \{-1,1\}$}.
						
						\begin{enumerate}
							\item If $n \in \{2,3\}$ and $|q|\neq 1$, then
							$
							\mathcal{C}^{(n)}(\tau)$ is an affine space modeled on the preserver component space $\mathcal{R}_n$, with target parameter
							$
							r(q,\tau)=q^{\operatorname{sgn}(\tau)}.
							$
							
							\item Assume $n\ge4$ and $|q|\neq1$. Then
							$$
							\mathcal{C}^{(n)}(\tau)\neq\varnothing
							\quad\Longleftrightarrow\quad
							\tau\in D_n.
							$$
							More precisely, cyclic shifts $\tau=c^k$, $c=(12 \ldots n)$, admit converters with $r(q,\tau)=q$, while reversed cyclic shifts $\tau=c^k\circ\sigma_0$ admit converters with $r(q,\tau)=q^{-1}$. In all cases,
							$
							\mathcal{C}^{(n)}(\tau)
							$ is an affine space modeled on $\mathcal R_n$.
							
							\item If $|q|=1$, let $\omega(q)$ denote the order of $q$.
							\begin{itemize}
								\item If $\omega(q)=\infty$, the classification is identical to the case $|q|\neq1$.
								\item If $\omega(q)=m$, with $3 \le m <\infty$ (i.e., $q$ is an $m$-th root of unity), the trace equations become linear congruences modulo $m$, and $\mathcal{C}^{(n)}(\tau)$ becomes a modular affine lattice.
							\end{itemize}
						\end{enumerate}
					\end{theorem}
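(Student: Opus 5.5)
The approach is to reduce the identity in \eqref{eq:tau_converter} to an equality of exponents of $q$, one for each permutation, and then to decide which combinations of inversion indicators are ``additive'' in the sense of Section~\ref{s3}. Reindexing the column permutation, $\P_q(AP_\tau)=\sum_{\pi}q^{\ell(\pi\tau')}\mathrm{wt}_\pi(A)$, where $\tau'\in\{\tau,\tau^{-1}\}$ depending on convention. Also $\P_r(q^\Lambda\circ A)=\sum_\pi r^{\ell(\pi)}q^{\tr_\pi(\Lambda)}\mathrm{wt}_\pi(A)$. Since the monomials $\mathrm{wt}_\pi(A)$ are linearly independent, $\Lambda\in\mathcal C^{(n)}(\tau)$ if and only if
\[
q^{\ell(\pi\tau')}=r^{\ell(\pi)}q^{\tr_\pi(\Lambda)}\qquad\text{for all }\pi\in\S_n.
\]
Taking $\pi=\mathrm{id}$ and then a single adjacent transposition forces $r=q^{s}$ for some $s$. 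For $|q|\ne1$ (or $q$ of infinite order) the condition becomes the real linear system $\tr_\pi(\Lambda)=\ell(\pi\tau')-s\,\ell(\pi)$. Once one solution exists, the solution set is that solution plus the homogeneous solutions, which are exactly $\mathcal R_n$ (Theorem~\ref{thm:preserver_properties}). This gives the ``affine space modeled on $\mathcal R_n$'' claims. For $q$ of finite order $m$, the same equations hold only modulo $m$, which gives item~3 directly.

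The key step is to put the right-hand side in a form where additivity can be tested. Write $x_{ij}(\pi)=[\pi(i)>\pi(j)]$ for $i<j$, and let $\varepsilon_{ij}=+1$ if $\tau'$ preserves the relative order of the pair $\{i,j\}$ and $-1$ otherwise. Using $x+(1-x)=1$ one gets
\[
\ell(\pi\tau')=\mathrm{const}+\sum_{i<j}\varepsilon_{ij}\,x_{ij}(\pi),
\]
so that $f_\tau(\pi):=\ell(\pi\tau')-s\ell(\pi)=\mathrm{const}+\sum_{i<j}(\varepsilon_{ij}-s)x_{ij}(\pi)$. Existence of $\Lambda$ is then equivalent to $f_\tau$ lying in the space $\mathcal A=\{\pi\mapsto\tr_\pi(\Lambda)\}$ of additive functions. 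I will characterize which coefficient arrays $c_{ij}$ make $\sum c_{ij}x_{ij}$ additive.

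\textbf{Small cases.} For $n=2$ every function on $\S_2$ is additive. For $n=3$, $\dim\mathcal A=(n-1)^2+1=5$ (proof of Theorem~\ref{thm:preserver_properties}), so $\mathcal A$ is the orthogonal complement of the sign character. Additivity then means the single condition $\sum_\pi\sgn(\pi)f_\tau(\pi)=0$. I compute $\sum_\pi\sgn(\pi)x_{ij}(\pi)$ for each of the three pairs, which is a short table, and check that $s=\sgn(\tau)$ satisfies the resulting linear equation for every $\tau\in\S_3$.

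\textbf{The case $n\ge4$.} I use the four-point test already exhibited in Section~\ref{s4}. If $\pi_1,\pi_2,\pi_3,\pi_4$ differ by two disjoint value-swaps on position pairs $\{i,j\}$ and $\{k,l\}$, then $P_{\pi_1}+P_{\pi_2}=P_{\pi_3}+P_{\pi_4}$, so every additive $f$ satisfies $f(\pi_1)+f(\pi_2)=f(\pi_3)+f(\pi_4)$. Applying this to $\sum c_{ij}x_{ij}$, the terms that do not cross between the two pairs cancel. The surviving terms, over all choices of the four values, yield relations of Monge type among the cross coefficients $c_{ik},c_{il},c_{jk},c_{jl}$. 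I expect these to force $c$ to lie in the span of the constant array and the ``cut'' arrays $c^{(m)}_{ij}=[\,i\le m<j\,]$. The cut arrays really are additive, since
\[
\sum_{i\le m<j}x_{ij}(\pi)=\sum_{i\le m}\pi(i)-\tfrac{m(m+1)}2 .
\]
With this span lemma in hand, $(\varepsilon_{ij}-s)$ must be such a combination. Because $\varepsilon_{ij}\in\{\pm1\}$, the pattern of order-reversed pairs must be either empty or everything up to a single cut (with $s=1$), or the complement of that (with $s=-1$). These patterns are exactly the rotations $c^k$ and the reflections $c^k\sigma_0$, which proves $\mathcal C^{(n)}(\tau)\ne\varnothing\iff\tau\in D_n$ with the stated values of $r$. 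For sufficiency I write $\Lambda$ explicitly: $\frac{n-1}2J_n$ for $\sigma_0$, as in \eqref{duality}, and for $c^k$ a matrix with one additive cut term, namely $\Lambda_{i,b}=2b$ on rows $i\le m$ plus a constant.

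\textbf{Main obstacle.} The hard part will be the span lemma, i.e. extracting enough independent four-point relations, from enough choices of values, to pin $c$ down to constants plus cuts for every $n\ge4$. I would first do $n=4$ by hand, where $\dim\mathcal A=10$ and there are only six coefficients $c_{ij}$. I would then induct by restricting to permutations that fix all values outside a chosen 4-element window of positions, so that the $n=4$ result applies to every window and the local constraints glue together.
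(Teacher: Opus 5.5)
Your route differs from the paper's. For necessity, the paper uses one explicit certificate. This is a balanced quadruple in $\mathfrak S_4$ on which $\nu\mapsto\ell(\tau\nu)-x\ell(\nu)$ has alternating sum $2$ for $\tau=(12)$. The paper transports it to the rest of $\mathfrak S_4\setminus D_4$ by dihedral symmetry and lifts it to $n\ge5$ through Lemma~\ref{lem:pattern}. Sufficiency comes from $\frac{n-1}{2}J_n$, the one-column matrix $\Lambda^{(c)}$ and the composition rule. You instead expand the inversion count in the indicators $x_{ij}$ and decide which coefficient arrays are additive. Then $D_n$ appears intrinsically, as the permutations whose inversion set is a single cut or its complement. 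This dispenses with the pattern lemma, the transport and the lifting, gives converters for every $\tau\in D_n$ directly, and explains $s=\pm1$. The paper's certificate is an integer identity, and it is reused as is for roots of unity; you would have to extract such a certificate from a violated local relation.

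As written, the argument needs four repairs.

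(i) The reindexing is on the wrong side. We have $\mathrm{wt}_\sigma(AP_\tau)=\mathrm{wt}_{\rho\sigma}(A)$ with $\rho\in\{\tau,\tau^{-1}\}$, so the coefficient is $q^{\ell(\tau'\pi)}$, as in the paper's $\ell(\tau\circ\nu)$. For the involution $\tau=(12)$ and $\pi=(123)$, $\ell(\tau\pi)=1$ but $\ell(\pi\tau)=3$, so no choice of $\tau'$ rescues your formula. Your expansion in the $x_{ij}(\pi)$ is valid for $\ell(\pi\tau')$ only. To repair it, pass to $\pi^{-1}$: $\ell(\tau'\pi)=\ell(\pi^{-1}\tau'^{-1})$, additivity survives $\pi\mapsto\pi^{-1}$ with $\Lambda\mapsto\Lambda^{T}$, and $D_n^{-1}=D_n$. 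Your explicit matrices must then be transposed.

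(ii) The span you expect is too large. The constant array gives $\ell$ itself, which is not additive: at $\pi_1=\mathrm{id}$ with position pairs $\{1,3\},\{2,4\}$,
\[
\ell(\mathrm{id})+\ell([3,4,1,2])=4\neq6=\ell([3,2,1,4])+\ell([1,4,3,2]).
\]
The correct statement is that $\sum c_{ij}x_{ij}$ is additive iff $c_{ij}=v_j-v_i$ for some $v\in\mathbb R^n$, i.e. $c$ lies in the span of the cuts alone. With the constant allowed you only get $s+\kappa=\pm1$, so ``with $s=1$'' would not follow.

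(iii) The lemma needs no induction. For positions $p_1<p_2<p_3<p_4$, each pairing gives its cross coefficients times a common factor $[\alpha>\gamma]+[\beta>\delta]-[\beta>\gamma]-[\alpha>\delta]$ of the base values. That factor can be made $\pm1$, which yields
\[
c_{13}-c_{14}-c_{23}+c_{24}=0,\qquad
c_{12}-c_{13}+c_{24}-c_{34}=0,\qquad
c_{12}-c_{14}+c_{23}+c_{34}=0,
\]
with indices referring to $p_1,\dots,p_4$. These are independent, and their solutions are exactly $c_{ab}=v_b-v_a$; only the third excludes constants. Every triple lies in such a window, so $c_{ik}=c_{ij}+c_{jk}$ for all $i<j<k$. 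Hence $s=\varepsilon_{ij}+\varepsilon_{jk}-\varepsilon_{ik}$, and $\pm3$ contradicts transitivity of the order. So $v$ is monotone with at most one jump of size $2$, which is your cut/co-cut dichotomy.

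(iv) In your conventions the rotation with cut $m$ has $\varepsilon_{ij}-1=-2[i\le m<j]$. Its converter is therefore $\Lambda_{i,b}=-2b$ on rows $i\le m$; the choice $+2b$ is the reflection case $s=-1$.

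Finally, for $|q|=1$ of infinite order the condition is not a single real linear system. Since $\Lambda$ is real, $q^t=1$ only gives $t\in\theta^{-1}\mathbb Z$ for $q=e^{2\pi i\theta}$. The system becomes $\tr_\pi(\Lambda)=f(\pi)+k_\pi/\theta$, as in item 7 of Theorem~\ref{thm:preserver_properties}. The classification survives: an integer four-point test $T$ with $T(\ell)=0\neq T(\ell(\tau\,\cdot))$, such as the paper's, would force $\theta\in\mathbb Q$. This has to be said; the paper's proof takes the same shortcut.
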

					Before proceeding to the proof of Theorem~\ref{thm:permutational}, we establish a combinatorial lemma regarding the standardization of dihedral permutations. 
					
					\begin{lemma}\label{lem:pattern}
						For $n \geq 4$ and $\tau \in \S_n$, the following assertions are equivalent:
						\begin{enumerate}[label=\upshape(\roman*)]
							\item $\tau \in D_n$.
							\item For every subset $I = \{i_1 < i_2 < i_3 < i_4\} \subset \{1,\ldots,n\}$,
							the standardization of $\bigl(\tau(i_1),\tau(i_2),$ $\tau(i_3),\tau(i_4)\bigr)$ belongs to $D_4$.
						\end{enumerate}
					\end{lemma}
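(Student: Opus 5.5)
The plan is to treat permutations as words $\tau(1)\tau(2)\cdots\tau(n)$ and to use the concrete description of $D_n\subset\S_n$. Its elements are the $n$ rotations $c^k$, whose words have the form $(k+1,\dots,n,1,\dots,k)$, together with their reversals $c^k\circ\s_0$. In word form, $\tau$ is a rotation if and only if it has at most one descent and, when a descent occurs, every entry after it is smaller than every entry before it. Equivalently, a rotation is either the identity, or it has exactly one descent and $\tau(n)<\tau(1)$. Composition with $\s_0$ reverses the word. This preserves $D_n$, and it preserves condition (ii) because reversing a $4$-subsequence maps $D_4$ to itself. So the reversed case always reduces to the rotation case. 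For reference, $D_4=\{1234,2341,3412,4123,4321,1432,2143,3214\}$.

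\emph{(i)$\Rightarrow$(ii).} If $\tau=c^k$, the word is a concatenation of two increasing blocks with the second block entirely below the first. Any subsequence $\bigl(\tau(i_1),\dots,\tau(i_4)\bigr)$ inherits this shape, one of its two blocks possibly being empty. Its standardization is therefore a rotation of $1234$, which lies in $D_4$. The case $\tau=c^k\circ\s_0$ follows by reversal.

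\emph{(ii)$\Rightarrow$(i).} First observe that every element of $D_4$ has at most one descent or at most one ascent. I would then split into three cases according to the descent and ascent counts of $\tau$.

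\emph{Case A: $\tau$ has at most one descent.} If $\tau$ has no descent, it is the identity. Otherwise the word consists of two increasing runs $a_1<\dots<a_p$ and $b_1<\dots<b_{n-p}$ with $a_p>b_1$. Suppose $\tau$ is not a rotation; then $a_1<b_{n-p}$. Using $n\ge4$, I would select four positions, including $a_1$, $a_p$, $b_1$, $b_{n-p}$ or neighbouring entries when a run is short, whose pattern has exactly one descent but is not a rotation. Examples of such patterns are $1324$, $1243$, $2134$, $1342$ and $2413$. A short subcase analysis on the run lengths ($p=1$, $p=n-1$, or both runs of length at least $2$) should settle this case.

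\emph{Case B: $\tau$ has at most one ascent.} Apply Case A to the reversed word $\tau\circ\s_0$.

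\emph{Case C: $\tau$ has at least two descents and at least two ascents.} I must exhibit a $4$-subset whose pattern lies outside $D_4$, which yields a contradiction. I expect this to be the main obstacle, since the descents and ascents can interleave in many ways. My first attempt would take a shortest window of consecutive positions containing two ascents and two descents; such a window has length at most five. I would then check that among its $4$-subsequences, which are consecutive or skip one letter, at least one standardizes to a pattern outside $D_4$. Only finitely many window shapes arise (for instance $\uparrow\downarrow\uparrow\downarrow$ or $\downarrow\uparrow\uparrow\downarrow$), so the check is a finite verification on patterns of length $4$ and $5$. Equivalently, one can verify that every permutation of $\S_5$ all of whose $4$-patterns lie in $D_4$ belongs to $D_5$, and then argue by induction on $n$ using length-$5$ windows. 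Either way, this case closes the implication.
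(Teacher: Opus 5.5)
Your direction (i)$\Rightarrow$(ii) and Cases A and B are fine. The gap is in Case C, which is where all the difficulty of (ii)$\Rightarrow$(i) lies.

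\textbf{The window bound is false.} You claim that a shortest window of consecutive positions containing two ascents and two descents has length at most five. A window of length $L$ contains only $L-1$ adjacent pairs, so $L\ge 5$ is forced, but nothing bounds $L$ from above. For $2\,6\,5\,4\,1\,3$ the signature is $\uparrow\downarrow\downarrow\downarrow\uparrow$, so the only such window is the whole word. Inserting further descents produces minimal windows of any length. Hence there are infinitely many window shapes, and no finite verification closes the case.

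\textbf{The obstruction can be non-local.} Take $\tau=9\,10\,11\,12\,5\,6\,7\,8\,1\,2\,3\,4$, which has two descents and nine ascents. Every block of five consecutive entries standardizes to a rotation in $D_5$. So every $4$-subsequence lying inside such a block is in $D_4$, and no local check detects anything. A forbidden pattern needs positions spread across the word, e.g.\ positions $1,5,8,9$ with values $(9,5,8,1)$, which standardizes to $4231\notin D_4$. The same example defeats your fallback ``induction on $n$ using length-$5$ windows'' if the windows are consecutive. If instead you mean arbitrary $5$-subsets, the $n=5$ check merely restates (ii), and the passage from there to $\tau\in D_n$ is the actual content of the lemma, which you have not supplied.

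\textbf{The missing idea is a global criterion.} The paper uses that $\tau\in D_n$ if and only if $\tau(k)$ and $\tau(k+1)$ are cyclically adjacent in $\mathbb{Z}_n$ for every $k\le n-1$; a Hamiltonian path in the $n$-cycle automatically closes up. Suppose this fails at some $k$.
\begin{itemize}
\item Both cyclic arcs between $\tau(k)$ and $\tau(k+1)$ are then nonempty, so you can pick $u,v$ whose values lie in different arcs.
\item In $I=\{k,k+1,u,v\}$, the indices $k$ and $k+1$ occupy consecutive positions.
\item Standardization preserves the cyclic order of the four values, so the standardized values of $\tau(k)$ and $\tau(k+1)$ differ by $2$.
\item This never happens at consecutive positions of an element of $D_4$.
\end{itemize}
This single argument covers your Cases A, B and C at once.

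Alternatively, you could try to rescue your approach with a deletion-based induction: apply the $(n-1)$ case to $\tau$ with its first entry removed and with its last entry removed, then glue the two conclusions. That gluing step would have to be proved; it is not automatic.
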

					
					\begin{proof}
						$\mathrm{(i)\Rightarrow(ii).}$
						If $\tau = c^k$, then $\tau(i) = ((i-1+k)\bmod n)+1$, so for any \mbox{$I = \{i_1 < \cdots < i_4\}$} the sequence $(\tau(i_1),\ldots,\tau(i_4))$ is an arithmetic progression with difference $1$ modulo $n$. Its standardization is a cyclic shift in $D_4$. The case $\tau = c^k\sigma_0$ is analogous with difference $-1$.
						
						$\mathrm{(ii)\Rightarrow(i).}$
						A permutation belongs to $D_n$ if and only if it strictly preserves cyclic adjacency. Suppose $\tau \notin D_n$. Then there exists an index $k$ such that the cyclic distance between $x = \tau(k)$ and $y = \tau(k+1)$ is strictly greater than 1. Thus, both cyclic intervals between $x$ and $y$ in $\{1, \dots, n\}$ are non-empty. Since $\tau$ is bijective, we can choose $u, v \in \{1, \dots, n\}$ such that $\tau(u)$ and $\tau(v)$ lie in distinct cyclic intervals separating $x$ and $y$. Let $I$ be the naturally ordered set $\{k, k+1, u, v\}$. The elements $k$ and $k+1$ are adjacent in $I$, but their images under $\tau$ separate the images of $u$ and $v$ cyclically. Consequently, the standardization $\tau_I \in \mathfrak{S}_4$ fails to preserve cyclic adjacency, meaning $\tau_I \notin D_4$. This contradiction forces $\tau \in D_n$.
					\end{proof}
					
					\noindent \textbf{Proof of Theorem~\ref{thm:permutational}.} Fix $\tau \in \mathfrak{S}_n$. Writing $r=q^x$, the conversion identity implies
					$
					q^{\operatorname{Tr}_\nu(\Lambda)} = q^{\ell(\tau\circ\nu) - x\ell(\nu)}.
					$ If $|q|\ne 1$ then the above condition is equivalent to the linear system
					\be
					\label{eq:trace_system}
					\operatorname{Tr}_\nu(\Lambda)
					=\ell(\tau\circ \nu) - x\ell(\nu),
					\qquad \forall \nu\in\mathfrak{S}_n.
				\end{equation}
				If $|q|= 1$, let $\omega(q) \in \mathbb{N} \cup \{\infty\}$ be the multiplicative order of $q$, then the condition is equivalent to the linear congruence system:
				\begin{equation}\label{eq:modular_system}
					\operatorname{Tr}_\nu(\Lambda) \equiv \ell(\tau \circ \nu) - x \ell(\nu) \pmod{\omega(q)}, \quad \forall \nu \in \mathfrak{S}_n.
				\end{equation} The question is to determine for which $\tau \in \S_n$ the systems~\eqref{eq:trace_system} and \eqref{eq:modular_system} admit a solution $(\Lambda, x) \in \mathbb{R}^{n \times n} \times \mathbb{R}$.
				\medskip
				
				\noindent 1. \textbf{Case $|q|\ne 1$ and $n \in \{2,3\}$.}
				For $n = 2$, the incidence matrix $\Pn_2$ has rank $2$, and a direct calculation shows that~\eqref{eq:trace_system} is consistent for every $\tau \in \S_2$ with $x = \sgn(\tau)$. Particular solutions are $\Lambda_0^{(2)}(\operatorname{id}) = O_2$ and $\Lambda_0^{(2)}((12)) = \bigl[\begin{smallmatrix}0&0\\1&0\end{smallmatrix}\bigr]$.
				For $n = 3$, the rank of $\Pn_3$ is $5$, and consistency with $x = \sgn(\tau)$ is verified for every $\tau \in \S_3$ by direct substitution, with the base matrices tabulated in Table~\ref{tab:L0}.
				\begin{table}
					\centering
					\renewcommand{\arraystretch}{1.5}
					\setlength{\tabcolsep}{6pt}
					\begin{tabular}{cccccccc}
						$\tau$  && $\mathrm{id}_3 $
						& $(12)$
						& $(23)$
						& $(13)$
						& $(123)$
						& $(132)$ \\
						\hline 
						\noalign{\vskip 5pt}
						$\Lambda_0^{(3)}(\tau)$ && $O_3$
						& {\renewcommand{\arraystretch}{0.9} $\begin{bmatrix}
								0 & 0 & 0 \\
								0 & 0 & -2 \\
								5 & 5 & 1
							\end{bmatrix}$}
						& {\renewcommand{\arraystretch}{0.9}$\begin{bmatrix}
								0 & 0 & 0 \\
								0 & -2 & -2 \\
								7 & 3 & 3
							\end{bmatrix}$}
						& {\renewcommand{\arraystretch}{0.9}$\begin{bmatrix}
								0 & 0 & 0 \\
								0 & 0 & 0 \\
								3 & 3 & 3
							\end{bmatrix}$}
						& {\renewcommand{\arraystretch}{0.9}$\begin{bmatrix}
								0 & 0 & 0 \\
								0 & 2 & 2 \\
								-4 & 0 & 0
							\end{bmatrix}$}
						& {\renewcommand{\arraystretch}{0.9}$\begin{bmatrix}
								0 & 0 & 0 \\
								0 & 0 & 2 \\
								-2 & -2 & 2
							\end{bmatrix}$}\\
						\noalign{\vskip 5pt}
						\hline
					\end{tabular}
					\caption{Base matrices for the affine space $\mathcal C^{(3)} (\tau)$.}
					\label{tab:L0}\end{table}
				\medskip
				
				\noindent 2. \textbf{Case $|q|\ne 1$ and $n \ge 4$.} First, we give a necessary condition of consistency. Let $\pi_1, \pi_2, \pi_3, \pi_4$ be four permutations. We call such a quadruple \emph{balanced} if, for each index $i \in \{1, \ldots, n\}$,
				\begin{equation}\label{eq:bal}
					\{\pi_1(i),\, \pi_4(i)\} = \{\pi_2(i),\, \pi_3(i)\} \quad \text{as multisets.}
				\end{equation}
				For every $\Lambda \in \mathbb{R}^{n \times n}$ and every balanced quadruple,
				\begin{equation}\label{eq:monge}
					\operatorname{Tr}_{\pi_1}(\Lambda) - \operatorname{Tr}_{\pi_2}(\Lambda) - \operatorname{Tr}_{\pi_3}(\Lambda) + \operatorname{Tr}_{\pi_4}(\Lambda) = 0.
				\end{equation}
				Indeed, the left‑hand side is equal to
				$$\sum_{i,j} \lambda_{i,j} \bigl(\delta_{\pi_1(i),j} - \delta_{\pi_2 (i),j} - \delta_{\pi_3(i),j} + \delta_{\pi_4(i),j}\bigr),$$
				which vanishes by condition~\eqref{eq:bal}. This is equivalent to the Monge (rectangle) condition. Consequently, if~\eqref{eq:trace_system} is consistent, the function $$f(\sigma) := \ell(\tau \circ \sigma) - x\,\ell(\sigma)$$ must satisfy~\eqref{eq:monge} for every balanced quadruple.
				\medskip
				
				\noindent \textit{Existence for $\tau \in D_n$.} First, from~\eqref{duality}, we know that $\frac{n-1}{2} J_n$ is a $\sigma_0$-permutational converter with the target parameter $x =-1$. On the other hand, let $c=(12 \dots n)$ be the cyclic shift. By a straightforward counting of inversions, the cyclic shift satisfies~\cite{bjorn}
				\[]
				\ell(c\circ \nu)-\ell(\nu)
				=2\nu^{-1}(n)-n-1,
				\qquad \nu\in\mathfrak{S}_n.
				\]
				The right-hand side is realized by the trace functional associated with the matrix $\Lambda^{(c)}$ defined by
				$
				\Lambda^{(c)}_{i,n}=2i-n-1$, and
				$\Lambda^{(c)}_{i,j}=0$ for $j\neq n.$
				Thus \eqref{eq:trace_system} holds for $\tau=c$ with $x=1$.
				\medskip
				
				\noindent \textit{Closure under composition.}
				Suppose $\tau_1, \tau_2 \in S_n$ admit converter exponents $\Lambda_1, \Lambda_2$ with parameters $x_1, x_2$. Using the identity $\operatorname{Tr}_{\tau_2 \nu}(\Lambda_1) = \operatorname{Tr}_\nu(\Lambda_1 P_{\tau_2}^T)$, we compute for every $\nu \in S_n$:
				$$
				\ell(\tau_1 \tau_2 \nu)
				= x_1\,\ell(\tau_2 \nu) + \operatorname{Tr}_{\tau_2 \nu}(\Lambda_1) = x_1 x_2\,\ell(\nu)
				+ \operatorname{Tr}_\nu\!\bigl(x_1 \Lambda_2 + \Lambda_1 P_{\tau_2}^T\bigr).
				$$
				Hence $\tau_1 \tau_2$ admits the converter exponent $x_1 \Lambda_2 + \Lambda_1 P_{\tau_2}^T$ with parameter $x_1 x_2$.
				Since $D_n = \langle c,\, \sigma_0 \rangle$, an induction on the length of the word in $\{c, \sigma_0\}$ establishes the existence of a converter for every $\tau \in D_n$. The target parameter is $x = 1$ for $\tau = c^k$ and $x = -1$ for $\tau = c^k \sigma_0$.
				\medskip
				
				\noindent \textit{Non‑existence for $\tau \notin D_4$.} Consider $n=4$ and $\tau_0 = (12)$ as a representative case. Suppose $(\Lambda , x)$ satisfies~\eqref{eq:trace_system}. Take the four permutations (in one‑line notation, fixing positions $\geq 5$):
				\[
				\pi_1 = [1,2,4,3], \quad
				\pi_2 = [4,2,1,3], \quad
				\pi_3 = [1,3,4,2], \quad
				\pi_4 = [4,3,1,2].
				\]
				This quadruple is balanced and
				$f(\pi_1)=2-x$, $f(\pi_2)=3-4x$, $f(\pi_3)=3-2x$, $f(\pi_4)=6-5x$, hence $f(\pi_{1})-f(\pi_{2})-f(\pi_{3})+f(\pi_{4})=2$, contradicting~\eqref{eq:monge}. The remaining 15 elements of $\mathfrak{S}_4 \setminus D_4$ split into symmetry classes under left and right multiplication by $D_4$. Since dihedral multiplication preserves balanced quadruples, the geometric obstruction for $\tau_0$ applies to each class. Thus the same contradiction holds for every $\tau \notin D_4$, so $\mathcal{C}^{(4)}(\tau)=\varnothing$.
				
				\paragraph{Reduction to the case $n = 4$ for $n \geq 5$}
				Let $\tau \notin D_n$ for $n \geq 5$. By Lemma~\ref{lem:pattern}, there exists $I = \{i_1 < i_2 < i_3 < i_4\}$ such that the standardization $\tau_I \in \S_4$ of $(\tau(i_1),\ldots,\tau(i_4))$ is not in $D_4$. Set $J = \{j_1 < j_2 < j_3 < j_4\} := \{\tau(i_1),\ldots,\tau(i_4)\}$. We lift the balanced quadruple associated with $\tau_{I}\in\mathfrak{S}_{4}\backslash D_{4}$ to $\mathfrak{S}_{n}$. These lifted permutations fix positions outside $I$ and map $I$ to $J$. This yields a balanced quadruple in $\mathfrak{S}_{n}$ satisfying $f(\pi_{1})-f(\pi_{2})-f(\pi_{3})+f(\pi_{4})=2$, which contradicts~\eqref{eq:monge}. Hence $\mathcal{C}^{(n)}(\tau) = \varnothing$.
				\medskip

				\noindent 3. \textbf{Case $|q|= 1$.} ~
				\medskip
				
				\noindent $-$ \emph{$q$ is not a root of unity.}
				The map $t \mapsto q^t$ is injective on $\mathbb{Z}$. The systems \eqref{eq:trace_system} and \eqref{eq:modular_system} are therefore equivalent, and the above classification applies without change.
				\medskip
				
				\noindent $-$ \emph{$q$ is a primitive $m$‑th root of unity, $m \geq 3$.}
				The system~\eqref{eq:modular_system} becomes a system of congruences modulo $m$. For $\tau \in D_n$, the solutions form a modular affine lattice as stated: \begin{equation*}
					\mathcal{C}^{(n)}(\tau) = \bigcup_{\mathbf{k} \in \mathcal{K}_\tau} \left( \Lambda_0^{(n)}(\tau,\mathbf{k}, m) + \mathcal{R}_n \right)
				\end{equation*}
				For $\tau \notin D_n$ and $n \geq 4$: identity~\eqref{eq:monge} is an integer identity (it does not involve $q$), so the balanced quadruple yields ($\varepsilon_k=\pm 1$)
				\[
				\sum_{k=1}^4 \varepsilon_k\, \operatorname{Tr}_{\pi_k}(\Lambda) = 0
				\qquad \text{while} \qquad
				\sum_{k=1}^4 \varepsilon_k\, f(\pi_k) = 2 \not\equiv 0 \pmod{m}
				\]
				for $m \geq 3$, a contradiction. Thus $\mathcal{C}^{(n)}(\tau) = \varnothing$ for $\tau \notin D_n$ and $m \geq 3$.\quad $\square$
				\medskip
				
				The map $\tau \mapsto x(\tau)$ defines a one-dimensional representation $x:D_n \rightarrow \{-1,1\}$ refining permutation parity: orientation-preserving cyclic shifts lie in the kernel ($x=1$) while reflections yield the alternating character $x=-1$. For $n \ge 4$, this is the unique representation that resolves the non-linear combinatorial constraints imposed by the inversion length. Dihedral permutations uniquely preserve cyclic adjacency. Thus, they alone maintain the consistency of the Schur multiplier system and avoid factorial obstructions in the full symmetric group.
				
				\begin{remark} From Table~\ref{tab:L0}, we see that, for all $\alpha, \beta \in \S_3$,
					\[
					\Lambda_0^{(3)}(\alpha\beta) = \Lambda_0^{(3)}(\beta) + \operatorname{sgn}(\beta)\, \Lambda_0^{(3)}(\alpha).
					\]
					This gives the dualities:
					$
					\Lambda_0^{(3)}(12)+\Lambda_0^{(3)}(132)=\Lambda_0^{(3)}(23)+\Lambda_0^{(3)}(123)=\Lambda_0^{(3)}(13).
					$ 
				\end{remark}
				
				\section{The mixed P\'olya problem}\label{s6}
				After seeing the limits of pure conversion, one may ask whether $q$-permanent can be written as a combination of the determinant and permanent. For $n \in \{3,4\}$ and any matrix $A \in \mathbb{C}^{n \times n}$, one can show that
				\begin{equation} \label{eq:mixed_polya}
					\P_q(A) = \frac{1+q}{2} \, \per(A^+) + \frac{1-q}{2} \, \det(A^-),
				\end{equation}
				where $A^\pm = (\pm q)^M \circ A$ for a specific exponent matrix $M \in \mathbb{R}^{n \times n}$. For example, $M$ may take the values
				\begin{equation} \label{ex4}
					M = \begin{bmatrix}
						0 & 1 & 2 \\
						-1 & 0 & 0\\
						0 & 0 & 0
					\end{bmatrix}
					\quad \text{and} \quad
					M =
					\begin{bmatrix}
						3 & 5 & 6 & 8 \\
						-4 & -2 & -2 & 0 \\
						-2 & -1 & -1 & 0 \\
						0 & 0 & 0 & 0 \\
					\end{bmatrix},
				\end{equation}
				for $n = 3$ and $n = 4$, respectively. This raises the problem of determining whether there exists a universal real exponent matrix $M$ such that Identity~\eqref{eq:mixed_polya} holds for all $A \in \mathbb{C}^{n \times n}$. Interestingly, this mixed P\'olya approach extends previous limits and enables valid conversions up to $n = 4$.
				
				Let $\mathcal{M}_n$ be the set of real exponent matrices that satisfy identity~\eqref{eq:mixed_polya} for all $A \in \mathbb{C}^{n \times n}$.
				\begin{theorem} \label{thm:mixed_conversion}
					Let $q \in \mathbb{C} \setminus \mathbb{S}^1$. The set $\mathcal{M}_n$ is a disjoint union of finitely many affine spaces modeled on $\mathcal{R}_n$. Each component is generated by a base matrix $M_0 \in \Z^{n \times n}$ with \be \label{delta} \delta_\sigma \coloneqq \ell(\sigma) - \tr_\sigma(M_0) \in \{0,1\}, \quad \text{for all $\s \in \S_n$}.\ee The number $a_n$ of such components depends only on $n$, with $a_2 = 4$, $a_3 = 15$, $a_4 = 8$, and $a_n = 0$ for $n \ge 5$, so $\mathcal{M}_n = \varnothing$ in that case.
				\end{theorem}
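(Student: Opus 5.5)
The plan is to turn \eqref{eq:mixed_polya} into a coefficient-by-coefficient condition on $\sigma$-traces, as in the proofs of Theorems~\ref{thm:preserver_properties} and~\ref{thm:permutational}. Both sides are multilinear in the entries of $A$, and distinct permutations give distinct monomials $\mathrm{wt}_\sigma(A)$, so we can compare coefficients. Write $t_\sigma=\tr_\sigma(M)$. Using $\sgn(\sigma)=(-1)^{\ell(\sigma)}$, the coefficient of $\mathrm{wt}_\sigma(A)$ on the right-hand side is
\[
c_\sigma=\tfrac{1+q}{2}\,q^{t_\sigma}+\tfrac{1-q}{2}\,(-1)^{\ell(\sigma)}(-q)^{t_\sigma}.
\]

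First I show that every $t_\sigma$ must be an integer; this is the first delicate point. Here $(-q)^{t}$ is read with a fixed branch, so $(-q)^{t}=e^{i\pi t}q^{t}$. The identity $c_\sigma=q^{\ell(\sigma)}$ then reads
\[
q^{t_\sigma-\ell(\sigma)}\bigl[(1+q)+(1-q)e^{i\pi(t_\sigma-\ell(\sigma))}\bigr]=2.
\]
I would combine this with the same identity for an adjacent permutation $\sigma s_i$. Its trace differs from $t_\sigma$ by a Monge-type increment, and its length differs by $\pm1$. The aim is to rule out non-integral phases using $|q|\ne1$: that hypothesis makes $u\mapsto|q|^{u}$ injective, and this should pin down both the modulus and the phase.

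Once the $t_\sigma$ are integers, set $\delta_\sigma=\ell(\sigma)-t_\sigma$ and $\varepsilon=(-1)^{\delta_\sigma}$. The bracket collapses:
\begin{itemize}[leftmargin=20pt]
\item if $\varepsilon=1$, then $c_\sigma=q^{t_\sigma}$;
\item if $\varepsilon=-1$, then $c_\sigma=q^{t_\sigma+1}$.
\end{itemize}
Since $|q|\neq1$, the condition $c_\sigma=q^{\ell(\sigma)}$ therefore forces exactly \eqref{delta}, namely $\delta_\sigma\in\{0,1\}$. The converse direction is immediate from the same computation. Consequently
\[
\mathcal{M}_n=\bigcup_{\delta}\{M:\ \tr_\sigma(M)=\ell(\sigma)-\delta_\sigma\ \ \forall\sigma\}.
\]
The union runs over those $\delta\in\{0,1\}^{\S_n}$ for which $\ell-\delta$ lies in the image of the incidence map $\mathcal A_n$. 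Each nonempty piece is a translate of $\ker\mathcal A_n=\mathcal R_n$, and different $\delta$ give disjoint pieces, so there are finitely many components.

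For integrality of a base point $M_0$, I normalize the first row and first column of $M_0$ to zero using $\mathcal R_n$, which is possible by Theorem~\ref{thm:preserver_properties}(3). The remaining entries are then read off from traces of permutations of the form identity, transposition, or products of two transpositions, all of which have integer right-hand sides.

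Counting the components amounts to deciding which $\delta$ make $\ell-\delta$ orthogonal to $\ker\mathcal A_n^{T}$. That kernel is spanned by the balanced-quadruple relations \eqref{eq:monge}.
\begin{itemize}[leftmargin=20pt]
\item For $n=2$, $\rk\mathcal A_2=2$, so there are no constraints and $a_2=4$.
\item For $n=3$, the single relation (from the example in Section~\ref{s3}) is $\sum_{\text{even}}=\sum_{\text{odd}}$. The even permutations have $\sum\ell=4$ and the odd ones $\sum\ell=5$, so the condition becomes $\sum_{\text{odd}}\delta-\sum_{\text{even}}\delta=1$. Hence $a_3=\sum_k\binom3k\binom3{k+1}=\binom64=15$.
\item For $n=4$, the image has codimension $24-10=14$. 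I would enumerate $\delta\in\{0,1\}^{24}$ against a basis of $14$ balanced-quadruple relations. Parity and symmetry under $\sigma\mapsto\sigma_0\sigma\sigma_0$ and inversion cut the search down, and I expect to find exactly the $8$ solutions. The matrix in \eqref{ex4} serves as a check.
\end{itemize}

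The main obstacle is the vanishing $a_n=0$ for $n\ge5$. My approach is a restriction argument. Permutations fixing $n$ have the same length in $\S_{n-1}$, and restricting $M$ to its leading $(n-1)\times(n-1)$ block shifts every trace by the constant $m_{nn}$. Since $\delta$ takes only the values $0,1$, this shift forces $m_{nn}=0$, so the restriction is a solution for $n-1$. The same works for permutations fixing $1$, via the $\sigma_0$-symmetry.

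Thus a solution for $n=5$ restricts to one of the $8$ components for $n=4$ on both the "fix $5$" and the "fix $1$" copies of $\S_4$. I would then exhibit, for each of these finitely many compatible pairs, a balanced quadruple in $\S_5$ that moves both $1$ and $5$ and on which $\ell-\delta$ violates \eqref{eq:monge}. The construction follows the pattern-lifting used in the reduction step of Theorem~\ref{thm:permutational}. Finally, the case $n\ge6$ follows by induction from the same restriction.
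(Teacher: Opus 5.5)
\textbf{Main gap: the case $n\ge5$.} The decisive part of the statement, $a_n=0$ for $n\ge5$, is not established by your proposal. After restricting to the two copies of $\mathfrak{S}_4$, you only announce that each compatible pair of $n=4$ patterns will be killed by some balanced quadruple in $\mathfrak{S}_5$. No quadruple and no case analysis is supplied, and the mechanism is weaker than it looks. Every balanced quadruple has the form $\pi_1=(a,b)$, $\pi_2=(a,b')$, $\pi_3=(a',b)$, $\pi_4=(a',b')$ for a splitting $S\sqcup T$ of the positions, with $a,a'$ defined on $S$ and $b,b'$ on $T$. Inversions inside $S$ or inside $T$ cancel, and counting the cross inversions shows that in $\mathfrak{S}_5$ one always has $|\ell(\pi_1)-\ell(\pi_2)-\ell(\pi_3)+\ell(\pi_4)|\le 2$. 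Since $\delta_{\pi_1}-\delta_{\pi_2}-\delta_{\pi_3}+\delta_{\pi_4}$ can take every value in $\{-2,\dots,2\}$, a quadruple whose members all move $1$ and $5$ can never produce a contradiction, because none of their $\delta$-values is fixed by your restrictions. You would have to mix pinned and free permutations and go through all compatible pairs, which is precisely the work left undone.

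The missing idea is the paper's single relation with a large defect. Split $D_5\subset\mathfrak{S}_5$ into rotations $C_5$ and reflections $R_5$. Then $\sum_{\sigma\in C_5}P_\sigma=\sum_{\sigma\in R_5}P_\sigma=J_5$, hence $\sum_{\sigma\in C_5}\tr_\sigma(M)=\sum_{\sigma\in R_5}\tr_\sigma(M)$. Inserting $\tr_\sigma(M)=\ell(\sigma)-\delta_\sigma$ together with $\sum_{\sigma\in C_5}\ell(\sigma)=20$ and $\sum_{\sigma\in R_5}\ell(\sigma)=30$ gives $\sum_{C_5}\delta_\sigma-\sum_{R_5}\delta_\sigma=-10$. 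But the left side is at least $-5$, a contradiction. This needs no $n=4$ data, and it passes to $n\ge5$ by fixing the extra indices (or by your restriction induction).

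\textbf{Supporting steps that need repair.}

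Integrality of the traces is only announced, and the route you sketch cannot work in general. For $n=2$ the two $\sigma$-traces are independent, so there is nothing to combine, while the scalar identity for a single $\sigma$ can have non-integer solutions. For example, take $q=-2i$, the branch $q^{1/2}=-1+i$, and $(-q)^t=e^{i\pi t}q^t$; then the matrix $\operatorname{diag}(1/2,0)$ satisfies both coefficient identities. The paper does not derive integrality; it assumes $\tr_\sigma(M)\in\mathbb{Z}$, and you should likewise make this a standing convention.

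The normalization of $M_0$ is impossible as stated. $\mathcal{R}_n$ is trace-free of dimension $2n-2$, so it cannot zero the whole first row and column. After zeroing $2n-2$ of those entries, the unique remaining representative has entries with denominator $n-2$ in general; for the $n=4$ matrix in \eqref{ex4} one gets $m_{11}=-7/2$. So nothing is read off integrally. An integer base point exists for a different reason: integer matrices with equal line sums are integer combinations of permutation matrices. Hence the nonzero Smith invariants of $\mathcal{A}_n$ equal $1$, and every integer vector in its image has an integer preimage.

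In the restriction step, $m_{nn}$ is not forced to vanish. Rather, constant vectors lie in the image of $\mathcal{A}_{n-1}$, so the restricted pattern is admissible anyway.

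Finally, $\ker\mathcal{A}_n^{T}$ is not spanned by balanced quadruples in general, since $\mathfrak{S}_3$ has no nontrivial ones. For $n=4$ you must therefore check that your $14$ relations really span it. On the positive side, your closed-form count $a_3=\sum_k\binom{3}{k}\binom{3}{k+1}=15$ is neater than the paper's computer search.
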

				\noindent \textbf{Proof.} Let $q\notin \mathbb{S}^1$ and let $M = [m_{ij}] \in \C^{n \times n}$ be an exponent matrix that satisfies the mixed P\'olya problem. Equating the coefficients of both sides of Eq.~\eqref{eq:mixed_polya} yields the following: \begin{equation} \label{eq:coefficient_identity}q^{\ell(\sigma)} = \frac{q^{\tr_\s(M)}}{2} \left[ (1+q) + (1-q) (-1)^{\ell(\sigma) + \tr_\s(M)} \right], \quad \text{for each $\sigma \in \mathfrak{S}_n$}.\end{equation} Assume $\tr_\s(M)\in\mathbb{Z}$ for every $\s$.
				if $\ell(\sigma) + \tr_\s(M)$ is even, \eqref{eq:coefficient_identity} implies that $\tr_\s(M) = \ell(\sigma)$. If $\ell(\sigma) + \tr_\s(M)$ is odd, then $\tr_\s(M) = \ell(\sigma)-1$. Thus, for every $\sigma$, we have the condition~\eqref{delta}. This gives a linear system of the form
				\be 
				\mathcal{A}_n \operatorname{vec}(M) = \mathbf{b}, \label{sys}
				\ee
				where $\mathbf{b} = (b_\sigma) \in \mathbb{Z}^{n!}$ is a target vector with components $b_\sigma \in \{l(\sigma), l(\sigma)-1\}$ and $\operatorname{vec}(M_0)$ is the vector in $\mathbb{R}^{n^2}$ obtained by stacking the entries of $M$ in the same order as used for the rows of $\mathcal{A}_n$. The existence of $M$ is equivalent to the existence of such a $\mathbf{b}$ in the column space of $\mathcal{A}_n$.
				\medskip
				
				\noindent \textit{Small dimensions.} For $n=2$, the exponent matrix 
				$M_0=\begin{bsmallmatrix}0&1\\-1&0\end{bsmallmatrix}$ clearly satisfies Eq.~\eqref{sys} for each of the two permutations in $\mathfrak{S}_2$. There are, in fact, four admissible choices for $M_0$ in this dimension. For $n \in \{3,4\}$, the exponent matrices in~\eqref{ex4} provide explicit solutions. A computer exhaustive search among $2^{3!}$ vectors for $n=3$ shows that 15 choices of $\mathbf{b}$ yield a consistent system, while for $n=4$ we similarly find eight admissible vectors $\mathbf{b}$ among $2^{4!}$ candidates. All such vectors are listed in~\ref{app:target_vectors}. Thus, for $n=2,3,4$, the set $\mathcal{M}_n$ is a disjoint union of 4, 15, and 8 affine spaces modeled on $\mathcal{R}_n$, respectively.
				\medskip
				
				\noindent \textit{Obstruction in $\S_{n \ge 5}$.} The system~\eqref{sys} becomes inconsistent if $n\ge 5$. To prove this, we exhibit a rigid geometric obstruction in $\mathfrak{S}_5$ (which trivially embeds into $\mathfrak{S}_n$ for $n \ge 5$ by fixing indices greater than 5) that conflicts with~\eqref{delta}.  Consider the dihedral subgroup $D_5 \subset \mathfrak{S}_5$. The group $D_5$ partitions into the set $C_5$ of  rotations and the set $R_5$ of reflections. Evaluating the permutation matrices over these two sets, we observe that:
				$$ \sum_{\sigma \in C_5} P_\sigma = \sum_{\sigma \in R_5} P_\sigma = J_5. $$
				Recall that the $\s$-trace of $M$ can be cast in the form: $\tr_\s(M)=\langle P_\s,M\rangle_F$, where $\langle \cdot, \cdot \rangle_F$ is the standard inner matrix product. The linearity of the inner product immediately yields:
				$$ \sum_{\s \in C_5} \tr_\s(M) - \sum_{\s \in R_5} \tr_\s(M) = 0. $$ 
				
				Now, suppose that there exists a universal exponent matrix $M \in \mathcal{M}_n$. Its traces must satisfy $\tr_\s(_M) = \ell(\sigma) -\delta_\s$ for all $\s \in \S_5$. Substituting into the previous identity and simplifying, we get
				$$ \sum_{\s \in C_5} \delta_\s - \sum_{\s \in R_5} \delta_\s = \sum_{\s \in C_5} \ell(\s) - \sum_{\s \in R_5} \ell(\sigma)=20-30=-10. $$ This is an algebraic contradiction. Because $\delta_\sigma \in \{0, 1\}$ for each of the 5 permutations in the respective sets, we have $\sum_{C_5} \delta_\s \ge 0$ and $\sum_{R_5} \delta_\s \le 5$. Therefore, the minimum possible value for the left-hand side is $-5$. The identity forces $-10 \ge -5$, which is impossible. Thus, for $n \ge 5$, no such $\delta_\s \in \{0,1\}$ exists, excluding any universal converter matrix $M \in \mathcal{M}_n$.\quad $\square$

				\begin{remark} When $q  \in \mathbb{S}^1 \setminus \{-1, 1\}$, one can show that each of the $a_n$ affine spaces constituting $\mathcal{M}_n$ splits into a countable lattice of parallel affine sheets. This behavior is strictly analogous to the structure of Hessenberg converter spaces $\mathcal{H}_n^\pm$.  \end{remark}
				
				For $n \le 4$, by differentiating the universal identity~\eqref{eq:mixed_polya} with respect to $q$ and then evaluating at $q = 1$ and $q = -1$, we obtain the unified relation
				\begin{equation} \label{eq:deriv_unified}
					\sum_{\sigma \in \mathfrak{S}_n} \varepsilon^{\ell(\sigma)} \, \delta_\sigma \, \operatorname{wt}_\sigma(A)
					= \frac{1}{2} \Big( \P_\varepsilon(A) - \P_{-\varepsilon}((-1)^M \circ A) \Big),
				\end{equation}
				where $\varepsilon = \pm 1$ and $\delta_\sigma = \ell(\sigma) - \tr_\sigma(M) = \ell(\sigma) - \tr_\sigma(M_0)$ is the Boolean gap between the length of the inversion and the $\sigma$-trace. Equation~\eqref{eq:deriv_unified} connects the determinant ($\varepsilon=-1$) and permanent ($\varepsilon=1$) via the sign matrix $(-1)^M$. 
				\begin{proposition}[Invariants of the sign matrix] \label{prop:invariants_SM}
					For any valid exponent matrix $M \in \mathcal{M}_n$ with $n \le 4$,
					\begin{align*}
						\det(-1)^M &= n! - 2 \sum_{\sigma \in \mathfrak{S}_n} \delta_\sigma, \\
						\operatorname{per}(-1)^M &= -2 \sum_{\sigma \in \mathfrak{S}_n} (-1)^{\ell(\sigma)} \delta_\sigma. 
					\end{align*}
				\end{proposition}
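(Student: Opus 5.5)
The plan is to evaluate both functions of the sign matrix directly through their permutation expansions, using the trace relations that define $\mathcal{M}_n$. (Equivalently, one could specialize \eqref{eq:deriv_unified} at $A=J_n$.) Throughout, interpret $(-1)^M$ entrywise as $[e^{i\pi m_{ij}}]$, in the same convention $z^\Lambda$ used for Schur multipliers. Then for every $\sigma\in\mathfrak S_n$,
\[
\mathrm{wt}_\sigma\bigl((-1)^M\bigr)=\prod_{i} e^{i\pi m_{i,\sigma(i)}}=e^{i\pi\tr_\sigma(M)} .
\]
By Theorem~\ref{thm:mixed_conversion}, $M=M_0+R$ with $M_0\in\mathbb Z^{n\times n}$ and $R\in\mathcal{R}_n$. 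Since $\tr_\sigma(R)=0$, we get $\tr_\sigma(M)=\tr_\sigma(M_0)=\ell(\sigma)-\delta_\sigma$, which is an integer. Hence $\mathrm{wt}_\sigma((-1)^M)=(-1)^{\ell(\sigma)-\delta_\sigma}$ is unambiguous.

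\emph{Determinant.} Expand
\[
\det(-1)^M=\sum_{\sigma}(-1)^{\ell(\sigma)}(-1)^{\ell(\sigma)-\delta_\sigma}=\sum_\sigma(-1)^{\delta_\sigma}.
\]
Because $\delta_\sigma\in\{0,1\}$ by \eqref{delta}, we have $(-1)^{\delta_\sigma}=1-2\delta_\sigma$. Summing over the $n!$ permutations gives $n!-2\sum_\sigma\delta_\sigma$.

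\emph{Permanent.} Similarly,
\[
\per(-1)^M=\sum_\sigma(-1)^{\ell(\sigma)-\delta_\sigma}=\sum_\sigma(-1)^{\ell(\sigma)}(1-2\delta_\sigma)=\sum_\sigma(-1)^{\ell(\sigma)}-2\sum_\sigma(-1)^{\ell(\sigma)}\delta_\sigma .
\]
The first sum equals $\det J_n$, which is $0$ for $n\ge2$ since even and odd permutations are equinumerous. This yields the stated formula.

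As a cross-check, the same identities follow from \eqref{eq:deriv_unified} with $A=J_n$, where $\mathrm{wt}_\sigma(J_n)=1$. Taking $\varepsilon=1$ gives $\sum\delta_\sigma=\tfrac12(\per J_n-\det(-1)^M)=\tfrac12(n!-\det(-1)^M)$. Taking $\varepsilon=-1$ gives $\sum(-1)^{\ell(\sigma)}\delta_\sigma=\tfrac12(\det J_n-\per(-1)^M)=-\tfrac12\per(-1)^M$.

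The only delicate point is the first step: the branch convention for $(-1)^{m_{ij}}$ with real, possibly non-integer, exponents. The argument needs only that the diagonal products collapse to $(-1)^{\tr_\sigma(M)}$, and that $\tr_\sigma(M)$ is an integer via the decomposition $M\in M_0+\mathcal R_n$. The degenerate case $n=1$, where $\sum(-1)^{\ell}=1$, should be excluded or noted separately.
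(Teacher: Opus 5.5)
Your proof is correct, and your cross-check is exactly the paper's proof: the paper's one line specializes the twin relations \eqref{eq:deriv_unified} at $A=J$. Your main argument instead expands $\det$ and $\per$ of $(-1)^M$ directly, using only $\tr_\sigma(M)=\ell(\sigma)-\delta_\sigma$ with $\delta_\sigma\in\{0,1\}$.

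At $A=J$, \eqref{eq:deriv_unified} reads $\sum_\sigma\varepsilon^{\ell(\sigma)}\delta_\sigma=\tfrac12\bigl(\P_\varepsilon(J)-\P_{-\varepsilon}((-1)^M)\bigr)$, so the two computations coincide in substance. Yours avoids the detour through differentiating \eqref{eq:mixed_polya} in $q$. That step is legitimate only because the trace conditions do not depend on $q$, whereas Theorem~\ref{thm:mixed_conversion} is stated for a fixed $q$.

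Your version also makes explicit two points the paper leaves tacit:
\begin{itemize}
\item $(-1)^M$ must be formed with one consistent branch, so that diagonal products collapse to $(-1)^{\tr_\sigma(M)}$ when $M$ has non-integer entries.
\item The permanent formula uses $\det J_n=0$, so it genuinely fails for $n=1$. There $\per(-1)^M=\pm1$, while the right-hand side is $0$ or $-2$. The hypothesis should therefore be read as $2\le n\le 4$.
\end{itemize}
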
 
				\begin{proof} These identities follow from the evaluation of the twin relations given by \eqref{eq:deriv_unified} on the all-ones matrix $A=J$. \end{proof}
				These relations show that the determinant and permanent of the sign matrix $(-1)^M$ are given by a combinatorial count: the number of permutations for which $\tr_\s(M)$ decreases from $\ell(\sigma)$ to $\ell(\sigma)-1$. Table \ref{tab:spectra_SM} shows the resulting spectra of $(-1)^M$.
				\begin{table}
					\centering
					\renewcommand{\arraystretch}{1.3}
					\begin{tabular}{cccc}
						\toprule
						\: $n$ \; & \; $\det(-1)^M$ \; & \; $\operatorname{per}(-1)^M$ \; & \; $\operatorname{Tr}(-1)^M$ \\
						\midrule
						$2$ & $\{-2, 0, 2\}$ & $\{-2, 0, 2\}$ & $\{0, 2\}$ \\
						$3$ & $\{-4, 0, 4\}$ & $\{2\}$ & $\{-1, 1, 3\}$ \\
						$4$ & $\{0\}$ & $\{0\}$ & $\{0, 4\}$ \\
						\bottomrule
					\end{tabular}
					\caption{\small Spectra of the trace, determinant and permanent of the sign matrix $(-1)^M$ evaluated over the solution space $\mathcal{M}_n$.}
					\label{tab:spectra_SM}
				\end{table}
				
				\medskip
				\subsection{The zero locus of the \texorpdfstring{$q$}{q}-permanent in low dimensions}
				\medskip
				
				The zero locus of the $q$-permanent, denoted $\mathcal{V}^{(n)}_q$, is defined as the set of complex matrices $A$ such that \mbox{$\P_q(A)=0$}. The mixed algebraic conversion provides a characterization of the variety $\mathcal{V}_q^{(n)}$ for dimensions $n \le 4$:
				
				\begin{corollary}\label{cor:zero_locus}
					Let $n \le 4$, \mbox{$q \in \mathbb{C} \setminus \{-1, 1\}$}, and $M \in \mathcal{M}_n$. Let $W_q \in \mathbb{C}^{n \times n}$ be the matrix obtained by scaling a single row of $(-1)^M$ by the scalar \mbox{$(q-1)/(q+1)$.} A matrix $A$ belongs to $\mathcal{V}_q^{(n)}$ if and only if its scaled matrix $A^+$ satisfies the generalized P\'olya conversion identity: 
					\be \per(A^+) = \det(W_q \circ A^+).\ee
				\end{corollary}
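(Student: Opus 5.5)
The plan is to derive the corollary directly from the mixed identity \eqref{eq:mixed_polya}, which holds for every $M\in\mathcal{M}_n$ when $n\le 4$. The idea is to rewrite the determinant term $\det(A^-)$ in terms of $A^+$ and then absorb the scalar coefficients into a single row of the sign matrix.

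\textbf{Step 1: relate $A^-$ to $A^+$.} I would show that
\[
\det(A^-)=\det\bigl((-1)^M\circ A^+\bigr).
\]
The naive route is the entrywise factorization $(-q)^{m_{ij}}=(-1)^{m_{ij}}q^{m_{ij}}$, but for non-integer $m_{ij}$ this depends on branch choices. To avoid that, I would work at the level of diagonal products. By Theorem~\ref{thm:mixed_conversion}, $M=M_0+R$ with $M_0\in\Z^{n\times n}$ and $R\in\mathcal{R}_n$. Hence $\tr_\sigma(M)=\tr_\sigma(M_0)\in\Z$ for every $\sigma$, since $\tr_\sigma(R)=0$ by \eqref{eq:hom_system}.

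Both sides of the claimed equality therefore expand as
\[
\sum_\sigma (-1)^{\ell(\sigma)}(-1)^{\tr_\sigma(M)}q^{\tr_\sigma(M)}\,\mathrm{wt}_\sigma(A),
\]
with integer exponents, so the equality holds. Here $(-1)^M$ is interpreted with the same convention used in Proposition~\ref{prop:invariants_SM}; only its diagonal products $(-1)^{\tr_\sigma(M)}$ enter. I expect this step to be the only genuinely delicate point: one must make sure that the conventions for $(\pm q)^M$ and $(-1)^M$ are compatible.

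\textbf{Step 2: absorb the scalar into one row.} Set $c=(q-1)/(q+1)$, which is well defined because $q\ne -1$. The matrix $W_q\circ A^+$ is obtained from $(-1)^M\circ A^+$ by multiplying one row by $c$. By multilinearity of the determinant in the rows, together with Step 1,
\[
\det(W_q\circ A^+)=c\,\det(A^-).
\]

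\textbf{Step 3: conclude.} By \eqref{eq:mixed_polya}, $\P_q(A)=0$ is equivalent to
\[
(1+q)\,\per(A^+)=(q-1)\det(A^-).
\]
Dividing by $1+q\neq 0$ gives $\per(A^+)=c\,\det(A^-)$, which by Step 2 equals $\det(W_q\circ A^+)$. Every step is an equivalence, so this yields both directions of the corollary.
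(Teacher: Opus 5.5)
Your Steps~2 and~3 are the paper's argument: at $\P_q(A)=0$, divide \eqref{eq:mixed_polya} by $1+q$ and absorb $(q-1)/(q+1)$ into one row by multilinearity. The difference, and the problem, is in Step~1.

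The paper uses the entrywise relation $A^-=(-1)^M\circ A^+$, i.e.\ the factorization $(-q)^{m_{ij}}=(-1)^{m_{ij}}q^{m_{ij}}$. This is the same factorization it uses to write \eqref{eq:coefficient_identity}, and it requires nothing about $M$. You instead pass to diagonal products, which needs $\tr_\sigma(M)\in\mathbb{Z}$, and you take this from Theorem~\ref{thm:mixed_conversion}. That theorem is stated only for $q\in\mathbb{C}\setminus\mathbb{S}^1$, whereas the corollary is claimed for all $q\in\mathbb{C}\setminus\{-1,1\}$. For $q\in\mathbb{S}^1\setminus\{\pm1\}$ the paper does not give you the decomposition $M=M_0+R$ with $M_0$ integral; the remark after the theorem describes a different, lattice-of-sheets structure there. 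So your Step~1 is unjustified on the unit circle, and as written you have only proved the corollary for $|q|\neq 1$.

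The repair is short, and you can do it in either of two ways:
\begin{itemize}
\item Adopt the entrywise convention above. Then Step~1 holds for every real $M$, and your branch discussion becomes unnecessary.
\item Prove integrality directly when $|q|=1$. Compare coefficients of $\mathrm{wt}_\sigma(A)$ in \eqref{eq:mixed_polya} and divide by $\mathrm{wt}_\sigma(q^M)$. This gives $2q^{\ell(\sigma)}/\mathrm{wt}_\sigma(q^M)=(1+q)+(1-q)\omega$, where $\omega=(-1)^{\ell(\sigma)}\mathrm{wt}_\sigma((-q)^M)/\mathrm{wt}_\sigma(q^M)$ has modulus $1$. The left side has modulus $2$. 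On the other hand, $|(1+q)+(1-q)\omega|^2=4+4\operatorname{Im}(q)\operatorname{Im}(\omega)$. Since $q\neq\pm1$, this forces $\operatorname{Im}(\omega)=0$, so $\omega=\pm1$. For the usual branches, with $\log(-q)-\log q=\pm i\pi$, this gives $\tr_\sigma(M)\in\mathbb{Z}$.
\end{itemize}
With the second fix, your branch-independent Step~1 covers the full range of $q$. It is then slightly more careful than the paper's entrywise identity, since it does not depend on a compatible choice of branches.
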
 
				
				\begin{proof} From evaluating the identity \eqref{eq:mixed_polya} at $\P_q(A) = 0$, we see that the zero locus $\mathcal{V}^{(n)}_q$ coincides with the algebraic hypersurface defined by: 
					$$(1+q) \per(A^+) + (1-q) \det(A^-) = 0.$$ 
					The relation $A^- = (-1)^M \circ A^+$ yields $\per(A^+) = \frac{q-1}{q+1} \det((-1)^M \circ A^+)$. Absorbing the scalar $\frac{q-1}{q+1}$ into any single row of $(-1)^M$ defines the matrix $W_q$. This completes the proof.
				\end{proof}
				\begin{example} The mixed conversion identity also yields explicit dense elements of the zero locus \(\mathcal V_q^{(4)}\). For instance, the matrix
					\[
					A=
					\begin{bmatrix}
						1 & -q & -q & -q\\
						1 & 1 & -q & -q\\
						1 & 1 & 1 & -q\\
						1 & 1 & 1 & w(q)
					\end{bmatrix},
					\quad \text{with} \quad 
					w(q)=\frac{q^2(1-q^4-q^5)}{1-q-q^2},
					\]
					satisfies \(\P_q(A)=0\). This provides a nontrivial dense example in the hypersurface \(\mathcal V_q^{(4)}\).
				\end{example}

				
				\section{Concluding Remarks}\label{s7}
				
				In this paper, we established a dimensional threshold for generalized P\'olya conversion problems associated with the $q$-permanent. For $n\le3$, the parameter $q$ admits affine families of Schur-multiplier converters and explicit mixed determinant--permanent representations.
				For $n\ge4$, the constraints induced by inversion length are incompatible with the additive structure of Schur exponents, leading to general nonexistence results. The results suggest several directions for further investigation.
				
				\begin{enumerate}[leftmargin=20pt]
					\item \textit{Mixed conversion identities beyond dimension 4.} The nonexistence of mixed converter spaces for $n \ge 5$ raises the question of whether this threshold is specific to Schur multipliers. It remains an open problem to determine whether other classes of linear transformations might bypass this geometric obstruction.
					
					\item \textit{Classification of sparse matrix patterns admitting exact conversion.} Theorem \ref{thm:hessenberg_converters} gives an explicit determinant conversion for lower Hessenberg matrices, yielding an $\mathcal{O}(n^3)$ polynomial-time algorithm for the $q$ within this class. An open problem is to classify matrix patterns (e.g., Toeplitz-type and interval-pattern matrices) that admit similar polynomial-time linear converters, or to identify the exact threshold where the conversion problem becomes $\#P$-hard.
					
					\item \textit{Connections with algebraic graph theory.}
					The use of permutation–entry incidence matrices and compatibility lattices suggests deeper links with combinatorial optimization and graph theory (cf. Section~\ref{sec:geometry}). In particular, the relation between the incidence matrix rank and competition numbers merits further study.
					
					\item \textit{Extensions to other immanants.} The weight $\sigma \mapsto q^{\ell(\sigma)}$ is not a class function for generic $q$, and does not define an immanant in the classical sense. It is natural to ask if these rigidity phenomena—specifically the dihedral restriction and the $n\ge5$ threshold—extend to standard immanants. Such extensions would involve $\mathrm{Imm}_{\lambda}(A)=\sum_{\sigma\in S_{n}}\chi^{\lambda}(\sigma)\prod_{i}a_{i,\sigma(i)}$ for higher-dimensional irreducible characters $\chi^{\lambda}$ of $\mathfrak{S}_{n}$. One may expect similar geometric obstructions governed by representation-theoretic data.
					
					\item \textit{The zero locus of the $q$-permanent.} 
					Corollary~\ref{cor:zero_locus} characterizes the zero locus $\mathcal{V}_q^{(n)}$ for $n \leq 4$ via a generalized P\'olya conversion identity. For larger $n$ and generic $q$, however, this variety remains largely unexplored. Besides the trivial zeros from the Frobenius--K\"onig condition, the $q$-permanent can also vanish through exact polynomial cancellations. Identifying a structural or graph-theoretic rule for these algebraic zeros remains an open problem.\end{enumerate}
				
				\appendix
				\section{Consistent target vectors for the mixed P\'olya problem} \label{app:target_vectors}
				
				The mixed conversion system \(\mathcal{A}_n \mathrm{vec}(M) = \mathbf{b}_n\) is consistent exactly when the target vector \(\mathbf{b}_n = (b_\sigma)_{\sigma \in \mathfrak{S}_n}\) lies in the column space of \(\mathcal{A}_n\). 
				Among the \(2^{n!}\) binary assignments \(b_\sigma \in \{\ell(\sigma), \ell(\sigma)-1\}\), only those satisfying this condition are valid. For brevity, we omit the base matrices \(M_0\) defining the affine solution spaces; any such matrix can be recovered by finding a particular solution using the target vectors \(\mathbf{b}_3\) and \(\mathbf{b}_4\) below.
				
				For $n=3$, we fix the order of the six permutations by their cycle structure and length as follows: $\sigma_1 = \operatorname{id}$, $\sigma_2 = (23)$, $\sigma_3 = (12)$, $\sigma_4 = (123)$, $\sigma_5 = (132)$, $\sigma_6 = (13)$. Out of $2^6 = 64$ possibilities, $15$ target vectors $\mathbf{b}_3$ yield a consistent system. Their components are given by{\small
					\begin{align*}
						(-1, 0, 0, 1, 2, 2) &  & (0, 0, 1, 2, 2, 3) & & (0, 0, 1, 1, 2, 2) \\
						(-1, 0, 0, 2, 1, 2) &  & (0, 0, 0, 1, 2, 3) & &(0, 1, 0, 2, 2, 3) \\
						(-1, 0, 0, 2, 2, 3) &  & (0, 1, 0, 1, 2, 2) & & (-1, 1, 0, 2, 2, 2) \\
						(-1, 0, 1, 2, 2, 2) &  & (0, 0, 0, 2, 1, 3) &  & (0, 0, 1, 2, 1, 2) \\
						(0, 0, 0, 1, 1, 2)  &  & (0, 1, 0, 2, 1, 2) & & (0, 1, 1, 2, 2, 2) 
					\end{align*}
				} For $n=4$, we take the lexicographic order of the 24 permutations $\sigma_i$: $\operatorname{id}$, $(34)$, $(23)$, $(234)$, $(243)$, $(24)$, $(12)$, $(12)(34)$, $(123)$, $(1234)$, $(1243)$, $(124)$, $(132)$, $(1342)$, $(13)$, $(134)$, $(13)(24)$, $(1324)$, $(1432)$, $(142)$, $(143)$, $(14)$, $(1423)$, $(14)(23)$. Among the $2^{24}$ possible binary choices, the column space of $\mathcal{A}_4$ restricts the solutions to exactly 8 consistent target vectors $\mathbf{b}_4$. They are as follows:
				{\small
					\begin{align*}
						& (0, 1, 1, 2, 2, 2, 0, 1, 2, 3, 3, 3, 1, 2, 2, 3, 4, 4, 3, 3, 4, 4, 5, 5) \\
						& (0, 1, 1, 2, 2, 2, 0, 1, 1, 3, 2, 3, 2, 3, 2, 4, 4, 5, 3, 3, 3, 4, 4, 5) \\
						& (0, 1, 0, 2, 1, 2, 0, 1, 1, 3, 2, 3, 1, 3, 2, 4, 4, 4, 3, 4, 4, 5, 5, 5) \\
						& (0, 1, 0, 1, 2, 2, 0, 1, 1, 3, 3, 4, 1, 2, 2, 4, 4, 5, 3, 3, 4, 5, 4, 5) \\
						& (0, 0, 1, 2, 1, 2, 1, 1, 2, 3, 2, 3, 2, 3, 2, 3, 4, 4, 3, 4, 3, 4, 5, 5) \\
						& (0, 0, 1, 1, 2, 2, 1, 1, 2, 3, 3, 4, 2, 2, 2, 3, 4, 5, 3, 3, 3, 4, 4, 5) \\
						& (0, 0, 0, 1, 1, 2, 1, 1, 2, 3, 3, 4, 1, 2, 2, 3, 4, 4, 3, 4, 4, 5, 5, 5) \\
						& (0, 0, 0, 1, 1, 2, 1, 1, 1, 3, 2, 4, 2, 3, 2, 4, 4, 5, 3, 4, 3, 5, 4, 5)
					\end{align*}
				}

			\end{document}